\def\subsectiontitle{}
\def\subsubsectiontitle{}
\newcommand{\abs}[1]{\left| #1 \right|} 
\def\ps@pprintTitle{%
 \let\@oddhead\@empty
 \let\@evenhead\@empty
 \def\@oddfoot{\emph{First draft: October 31, 2017\hfill{This draft: \today}}}%
 \let\@evenfoot\@oddfoot}
\newsavebox\extrainfobox
\newtheorem{prop}{Proposition}
\crefname{prop}{Proposition}{Propositions}
\newtheorem{thm}{Theorem}
\crefname{thm}{Theorem}{Theorems}
\newtheorem{cor}{Corollary}[thm]
\crefname{cor}{Corollary}{Corollaries}
\newtheorem{lem}{Lemma}
\crefname{lem}{Lemma}{Lemmas}
\newtheorem{ass}{Assumption}
\crefname{ass}{Assumption}{Assumptions}
\newtheorem{defi}{Definition}
\crefname{defi}{Definition}{Definitions}
\theoremstyle{remark}
\theoremstyle{definition}
\crefname{eg}{Example}{Examples}
\crefname{problem}{Problem}{Problems}
\newcommand{\overbar}[1]{\mkern 1.5mu\overline{\mkern-1.5mu#1\mkern-1.5mu}\mkern 1.5mu}
\let\oldfootnote\footnote
\renewcommand\footnote[1]{\oldfootnote{\hspace{.4mm}#1}}
\renewenvironment{proof}[1][\proofname] {\par\pushQED{\qed}\normalfont\topsep6\p@\@plus6\p@\relax\trivlist\item[\hskip\labelsep\bfseries#1\@addpunct{.}]\ignorespaces}{\popQED\endtrivlist\@endpefalse}
\let\oldFootnote\footnote
\newcommand\nextToken\relax
\renewcommand\footnote[1]{%
    \oldFootnote{#1}\futurelet\nextToken\isFootnote}
\newcommand\isFootnote{%
    \ifx\footnote\nextToken\textsuperscript{,}\fi}
\begin{document}

\begin{abstract}
	Let $\mathcal{V}$ be the set of all combinations of expected value of finite objective functions from designing information. I showed that $\mathcal{V}$ is a compact and convex set implementable by signal structures with finite support when unknown states are finite. Moreover, $\mathcal{V}(\mu)$ as a correspondence of prior is continuous. This result can be applied to develop a concavification method of Lagrange multipliers that works with general constrained optimization. It also provides tractability to a wide range of information design problems.
\end{abstract}
\begin{keyword}
	concavification \sep method of Lagrange multipliers \sep Bayes persuasion \sep information design
\end{keyword}

\begin{frontmatter}
	\title{Information design possibility set}
	\author[wz]{Weijie Zhong}
	\address[wz]{Columbia University}
	\ead{wz2269@columiba.edu}
\end{frontmatter}

\section{Introduction}
Let $X$ be a non-empty finite set (state space). $\Delta(X)\in \mathbb{R}^{\abs{X}}$ is the set of all probability measure on $X$. Let $\mu$ denote elements in $\Delta(X)$. $\Delta^2(X)$ is the set of all probability measures (standard Borel measurability) on $\Delta(X)$. Let $P$ denote elements in $\Delta^2$(X). Let $\left\{ V^i \right\}_{i=1}^{n}$ be a finite set of continuous function on $\Delta(X)$. Let $f:\mathbb{R}^{n}\to \mathbb{R}$ be a continuous function. Let $D(\mu):\Delta(X)\to\mathbb{R}^{n}$ be closed valued.\par
My objective is to solve the following constrained maximization problem:
\begin{align}
	\sup_{P\in \Delta^2(X)}&f\left( E_P[V^1],\dots,E_P[V^n] \right)\label{eqn:P}\\
	\mathrm{s.t.}\ &
	\begin{dcases}
		\left( E_{P}[V^1],\dots,E_{P}[V^n] \right)\in D(\mu)\\
		E_P[\nu]=\mu
	\end{dcases}\notag
\end{align}
Suppose $n=1$ and $D\equiv \mathbb{R}$, then \cref{eqn:P} can be solved by concavifying $V^1(\mu)$ (\cite{kamenica2011bayesian}, \cite{aumann1995repeated}). And \cref{thm:cara} implies that it is without loss to consider optimal information structure involving signal number no more than $\abs{X}$. This gives tractability both analytically and computationally. However, even when $n=2$, with a general $f$ or a nontrivial constraint $D$, concavification no longer works and we might need to search over an infinite dimensional space to solve \cref{eqn:P}.\par
To solve \cref{eqn:P}, I studied the set of all possible combinations of expected valuation that can be implemented by designing information $P$. In \cref{sec:thm}, I proved a two-step concavification method: First, the information design possibility set itself can be implemented by combining finite number of information structures that implement its extreme points. Second, each extreme point can be implemented by concavifying a linear combination of $V^i$'s, thus involves only finite number of signals.\par

The general concavification method developed in this paper can be applied to a wide range of information design problems. In \cref{sec:app}, I first provide two applications in static information acquisition and dynamic information acquisition to show that the optimal solutions have a nice Lagrange multiplier characterization. Then I provide an application of persuading receivers with outside options to illustrate how the Lagrange characterization can simplify the optimization problem. Finally I provide an application of \cref{lem:1} in a setup of screening using information structures, to illustrate how the dimensionality of the problem can be reduced to make the problem tractable.

\section{Information possibility set}
Notations used: given a convex set $C$, $\mathrm{ext}(C)$ is set of extreme points of $C$, $\mathrm{ext}_k(C)$ is set of $k$-extreme points of $C$ \footnote{$\mathrm{ext}(C)=\bigcup_{k< n}\mathrm{ext}_k(C)$ and $C=\bigcup_{k\le n}\mathrm{ext}_k(C)$.}. $\mathrm{exp}(C)$ is set of exposed points of $C$. $F(C)$ is set of faces of $C$.
\begin{defi}
	Information possibility set $\mathcal{V}(\mu)\in\mathbb{R}^n$ is defined as:
	\begin{align*}
		\mathcal{V}(\mu)=\left\{ \left( E_P[V^1],\dots,E_P[V^{n}] \right)\bigg| P\in \Delta^2(X),E_P[\nu]=\mu \right\}
	\end{align*}
\end{defi}

\begin{lem}
	$\forall \mu$, $\mathcal{V}(\mu)$ is a compact and convex set. $\forall v \in \mathrm{ext}_k(\mathcal{V}(\mu))$, there exists $P\in \Delta^2(X)$ such that:
	\begin{align*}
		\begin{cases}
			v=\left( E_P[V^1],\dots, E_P[V^n] \right)\\
			\abs{\mathrm{supp}(P)}\le(k+1)\abs{X}
		\end{cases}
	\end{align*}
	\label{lem:1}
\end{lem}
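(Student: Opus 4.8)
The plan is to treat the two claims separately. Convexity and compactness I would obtain directly from linearity. Given $v_1,v_2\in\mathcal V(\mu)$ implemented by $P_1,P_2$, the mixture $\lambda P_1+(1-\lambda)P_2$ still satisfies the barycenter constraint $E_P[\nu]=\mu$ and implements $\lambda v_1+(1-\lambda)v_2$, because both the constraint and each coordinate $E_P[V^i]$ are linear in $P$; hence $\mathcal V(\mu)$ is convex. For compactness I would use that $\Delta^2(X)$ is weak-$*$ compact (since $\Delta(X)$ is compact metric), that $\{P:E_P[\nu]=\mu\}$ is weak-$*$ closed, and that $P\mapsto(E_P[V^1],\dots,E_P[V^n])$ is weak-$*$ continuous because each $V^i$ is continuous and bounded on the compact set $\Delta(X)$; thus $\mathcal V(\mu)$ is the continuous image of a compact set.

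The support bound is the real content, and I would establish it first in the base case $k=0$, i.e. for $v\in\mathrm{ext}(\mathcal V(\mu))$. If $v$ is moreover an exposed point, there is a direction $\lambda\in\mathbb R^n$ for which $v$ is the unique maximizer of $\lambda\cdot v'$ over $v'\in\mathcal V(\mu)$. Writing $V_\lambda=\sum_i\lambda_iV^i$, maximizing $\lambda\cdot v'$ is exactly the single-function concavification problem $\max_P E_P[V_\lambda]$ subject to $E_P[\nu]=\mu$, whose value is $\mathrm{cav}\,V_\lambda(\mu)$. By \cref{thm:cara} this is attained by some $P$ with $\abs{\mathrm{supp}(P)}\le\abs{X}$, and since $v$ is the unique maximizer the set of optimal $P$ all implement $v$, so in particular that $P$ does.

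Two things remain. For extreme points that are not exposed, I would invoke Straszewicz's theorem, that exposed points are dense in $\mathrm{ext}(\mathcal V(\mu))$: write $v=\lim_m v_m$ with each $v_m$ exposed and implemented by $P_m=\sum_{l=1}^{\abs{X}}\alpha^m_l\delta_{\nu^m_l}$. The weights lie in a simplex and the posteriors in the compact set $\Delta(X)$, so a subsequence of $(\alpha^m,\nu^m)$ converges; continuity of each $V^i$ and of the barycenter map $\nu\mapsto\nu$ then shows the limit $P=\sum_l\alpha_l\delta_{\nu_l}$ has $\abs{\mathrm{supp}(P)}\le\abs{X}$, satisfies $E_P[\nu]=\mu$, and implements $v$. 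For general $k$, I would use that $v\in\mathrm{ext}_k(\mathcal V(\mu))$ lies in the relative interior of its minimal face $F$, a compact convex set of dimension $k$; Krein--Milman together with Carathéodory inside the affine hull of $F$ writes $v=\sum_{j=0}^{k}\beta_j u_j$ with the $u_j$ extreme points of $F$. Since $F$ is a face, each $u_j$ is extreme in $\mathcal V(\mu)$, hence implemented by some $P_j$ with $\abs{\mathrm{supp}(P_j)}\le\abs{X}$ by the base case, and $P=\sum_j\beta_j P_j$ then satisfies the barycenter constraint, implements $v$, and has $\abs{\mathrm{supp}(P)}\le(k+1)\abs{X}$.

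I expect the main obstacle to be exactly the non-exposed extreme points: the clean reduction to concavification supplies an exposing direction only when $v$ is exposed, so the passage to all extreme points rests on the density argument together with a compactness limit, and the delicate point is to guarantee that the limiting structure retains support size at most $\abs{X}$ rather than collapsing weights in a way that perturbs the implemented value. The rest of the argument is bookkeeping, since the face-decomposition step merely mixes base-case structures and the support sizes add.
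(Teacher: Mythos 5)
Your proposal is correct and follows essentially the same route as the paper: convexity from linearity of the expectation operator, compactness via weak-$*$ compactness of $\Delta^2(X)$ (Prokhorov) and continuity of $P\mapsto(E_P[V^i])$, the concavification argument for exposed points, Straszewicz plus a finite-dimensional limit to cover non-exposed extreme points, and the face decomposition via Krein--Milman and Carath\'eodory (applied in the $k$-dimensional affine hull, which you state slightly more explicitly than the paper does) to reach general $k$-extreme points. No substantive differences.
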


\begin{proof}
	First of all, we prove that $\mathcal{V}(\mu)$ is compact and convex.
	\begin{itemize}
		\item Boundedness: $\forall P\in \Delta^2(X)$, $\min_{\mu\in \Delta(X)}V^i(\mu)\le E_P[V^i]\le\max_{\mu\in \Delta(X)}V^i(\mu)$. Therefore, $\forall v\in \mathcal{V}(\mu)$, it is bounded from $0$ by $\max_{\mu\in \Delta(X),i}\abs{V^i(\mu)}$ by sup norm. So $\mathcal{V}(\mu)$ is a bounded set.
		\item Convexity: $\forall v_1,v_2\in \mathcal{V}(\mu)$, there exists $P_1,P_2\in \mathcal{D}^2(X)$ s.t. $v_i=\left( E_{P_i}[V^1],\dots,E_{P_i}[V^{n}] \right)$. Since $\Delta^2(X)$ is a linear space and expectation operator is linear functional, $\forall \beta\in[0,1]$, $P_{\beta}=\beta P_1+(1-\beta) P_2\in \Delta^2(X)$ and:
			\begin{align*}
				v_{\beta}=& \left( E_{P_{\beta}}[V^1],\dots,E_{P_{\beta}}[V^{n}] \right)\\
				=&\beta\left( E_{P_1}[V^1],\dots,E_{P_1}[V^{n}] \right)+(1-\beta)\left( E_{P_2}[V^1],\dots,E_{P_2}[V^{n}] \right)\\
				=&\beta v_1+(1-\beta) v_2
			\end{align*}
			Therefore, $\beta v_1+(1-\beta)v_2\in \mathcal{V}(\mu)$ so $\mathcal{V}(\mu)$ is a convex set.
		\item Closeness: $\Delta(X)$ is a finite dimensional simplex. If we consider Prokhorov metric on $\Delta^2(X)$, then $\Delta^2(X)$ is a complete and separable space (Theorem 6.8 of \cite{billingsley2013convergence}). Now since $\Delta(X)$ is compact, by \cref{thm:prok}, $\Delta^2(X)$ is a compact, complete and separable space with Prokhorov metric. Prokhorov metric induces a topology equivalent to weak$^*$ topology(by Theorem 6.8 of \cite{billingsley2013convergence}). So $\forall v_k\in \mathcal{V}(\mu)$, if $v_k\to v$, then consider the sequence $P_k$ such that $v_k=E_{P_k}\left[ (V^i) \right]$. By compactness of $\Delta^2(X)$, pick a subsequence $P_k\xrightarrow{w-^*} P$. Then $\forall V^i$, since $V^i$ is continuous, $E_{P_k}[V^i]\to E_{P}[V^i]$. So $v\in \mathcal{V}(\mu)$ and $\mathcal{V}(\mu)$ is a closed set.
		\item Compactness: $\mathcal{V}(\mu)$ is a finite dimensional bounded and closed set, so it is compact.
	\end{itemize}

	$\forall v\in \mathrm{ext}_k(\mathcal{V}(\mu))$, $v$ is an interior point of a $k$-dimensional face $F$ of $\mathcal{V}(\mu)$. Then by \cref{thm:krein}, $v\in \mathrm{conv}\left( \mathrm{ext}(F) \right)$. By \cref{thm:cara}, there exists $\left\{ v_j \right\}_{j=1}^{k+1}\subset \mathrm{ext}(F)$ and $\sum \pi_j=1$ s.t. $\sum \pi_j v_j=v$. By \cref{lem:face}, $\left\{ v_j \right\}\subset \mathrm{ext}(\mathcal{V}(\mu))$. The next step is to prove that $\forall j$, there exists $P_j\in \Delta^2(X)$ s.t. $v_j=\left( E_P[V^1],\dots,E_P[V^n] \right)$ and $\abs{\mathrm{supp}(P_j)}\le\abs{X}$. 

\begin{lem}
	$\forall \mu$, $\forall v\in \mathrm{exp}(\mathcal{V}(\mu))$, $\exists P\in \Delta^2(X)$ and $\abs{\mathrm{supp}(P)}\le \abs{X}$ s.t. $v=E_P\left[ (V^i) \right]$.
	\label{lem:exp}
\end{lem}
\begin{proof}
	By definition of exposed points, there exists a linear function $l\in L(\mathbb{R}^{n})$ s.t. 
	\begin{align*}
		l(v)>l(v')\ \forall v'\in \mathcal{V}(x),v'\neq v
	\end{align*}
	In finite dimensional space, a linear function $l(v)$ can be equivalently written as $\sum \lambda_i v_i+c$. Consider the following maximization problem:
	\begin{align}
		\sup_{P\in \Delta^2(X)}&E_P\left[ \sum\lambda_i V^i+c \right]\label{eqn:bp}\\
		\mathrm{s.t.}\ &E_P[\nu]=\mu\notag
	\end{align}
	By \cref{thm:cara}, \cref{eqn:bp} can be solved by convexifying the graph of $\sum \lambda_i V^i(\mu)+c$. The maximum is achieved by a $P$ s.t. $\abs{\mathrm{supp}(P)}\le \abs{X}$. Of course $E_P[(V^i)]\in \mathcal{V}(\mu)$. Then by definition of $l$, $l(v)\ge E_P\left[ \sum \lambda_i V^i+c \right]$. On the other hand, there exists $P'\in \Delta^2(X)$ s.t. $v=E_{P'}\left[ (V^i) \right]$, by optimality of $P$, $l(v)\le E_P\left[ \sum \lambda_iV^i+c \right]$. Therefore, since $v$ is the unique element in $\mathcal{V}(\mu)$ achieving $l(v)$, we have $E_P[(V^i)]=v$ and $\abs{\mathrm{supp}(P)}\le \abs{X}$. 
\end{proof}

$\forall v^j\in \mathrm{ext}(\mathcal{V}(\mu))$, by \cref{thm:stra}, there exists $\left\{ v^{jl} \right\}_{l=1}^{\infty}\subset \mathrm{exp}(\mathcal{V}(\mu))$ and $\lim_{l\to\infty}v^{jl}=v^j$. By \cref{lem:exp}, there exists $P^{jl}\in \Delta^2(X)$ s.t. $\abs{\mathrm{supp}(P^{jl})}\le\abs{X}$ and $v^{jl}=E_{P^{jl}}\left[ (V^i) \right]$. Now each $P^{jl}$ can be represented as $\left( p^{jl}_t,\mu^{jl}_t \right)_{t=1}^{\abs{X}}\in\mathbb{R}^{2\abs{X}}$, where:
\begin{align*}
	\begin{cases}
		\sum_t p^{jl}_t=1\\
		\sum_t p^{jl}_t\mu^{jl}_t=\mu\\
		\sum_t p^{jl}_t V^i(\mu^{jl}_t)=v^{jl}_i\ \forall i
	\end{cases}
\end{align*}
Since $\left( p^{jl}_t,\mu^{jl}_t \right)$ is in finite dimensional vector space, there exists a subsequence converging to $\left( p^{j}_t,\mu^j_t \right)$ when $l\to \infty$. Therefore, since $V^i$ is each continuous, it is easy to verify that:
\begin{align*}
	\begin{cases}
		\sum_tp^{j}_t=1\\
		\sum_tp^j_t\mu^j_t=\mu\\
		\sum_tp^j_tV^i(\mu^j_t)=v^j_i\ \forall i
	\end{cases}
\end{align*}
Therefore, $v^j$ is implemented by $P^j\in \Delta^2(X)$ and $\abs{\mathrm{supp}(P^j)}\le\abs{X}$. So $P=\sum \pi_j P^j\in \Delta^2(X)$ and $\abs{\mathrm{supp}(P)}\le (k+1)\cdot\abs{X}$. By linearity of expectation operator, $E_{P}\left[ (V^i) \right]=\sum \pi_j E_{P^j}\left[ (V^i) \right]=\sum \pi_j v^j=v$.
\end{proof}

\begin{lem}
	Correspondence $\mathcal{V}:\Delta(X)\to \mathbb{R}^n$ is continuous. $\mathrm{Gr}(\mathcal{V})$ is convex and compact.
	\label{lem:closegraph}
\end{lem}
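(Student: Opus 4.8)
The plan is to reduce the entire statement to one structural observation: writing $W=(V^1,\dots,V^n):\Delta(X)\to\mathbb{R}^n$ for the valuation map and $K=\{(\mu,W(\mu)):\mu\in\Delta(X)\}\subseteq\Delta(X)\times\mathbb{R}^n$ for its graph, I claim $\mathrm{Gr}(\mathcal{V})=\mathrm{conv}(K)$. One inclusion is immediate: a finitely supported $P=\sum_j\lambda_j\delta_{\mu_j}$ gives $(E_P[\nu],E_P[W])=\sum_j\lambda_j(\mu_j,W(\mu_j))\in\mathrm{conv}(K)$, and a general $P$ yields the barycenter of $(\nu,W)$ under $P$, which still lies in $\mathrm{conv}(K)$, the (closed) convex hull of the compact set $K$. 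Conversely every convex combination of points of $K$ is realized by the corresponding finitely supported $P$. Since $W$ is continuous and $\Delta(X)$ is compact, $K$ is compact, so $\mathrm{conv}(K)$ is compact and convex in finite dimensions; this gives both assertions about the graph at once, and re-derives fiberwise the compactness and convexity of \cref{lem:1}. Note also that $\pi_{\Delta(X)}(\mathrm{Gr}(\mathcal{V}))=\Delta(X)$, since $(\mu,W(\mu))\in\mathrm{Gr}(\mathcal{V})$ for every $\mu$, so $\mathcal{V}$ is nonempty-valued.

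For continuity I would establish upper and lower hemicontinuity separately. Upper hemicontinuity is essentially free: the graph is compact, hence closed, and all values lie in a fixed compact ball of $\mathbb{R}^n$, so the closed-graph theorem for correspondences with compact range gives uhc. Lower hemicontinuity I would verify through the sequential criterion: fix a prior $\mu_0$, a point $v_0\in\mathcal{V}(\mu_0)$, and a sequence $\mu_k\to\mu_0$, and construct $v_k\in\mathcal{V}(\mu_k)$ with $v_k\to v_0$. Applying \cref{thm:cara} to $(\mu_0,v_0)\in\mathrm{conv}(K)$, I may fix a finite splitting $v_0=\sum_j\lambda_j W(\mu_j)$ with $\sum_j\lambda_j\mu_j=\mu_0$; because the $\mu_j$ average to $\mu_0$, each lies in the face $\Delta(S)$, where $S=\mathrm{supp}(\mu_0)$.

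The main obstacle is that lhc can only fail at boundary priors, where $\mathrm{supp}(\mu_k)$ may strictly contain $S$: the construction must move mass onto the new states $R=X\setminus S$ while keeping every posterior in $\Delta(X)$ and every value close to $v_0$. I would handle this in two stages. First, split $\mu_k=\mu_k|_S+\mu_k|_R$; the new mass $\epsilon_k=\sum_{x\in R}(\mu_k)_x\to 0$, and I absorb it with a single atom $\rho_k=\mu_k|_R/\epsilon_k\in\Delta(R)$, reducing the problem to realizing the renormalized marginal $\mu_k^S=\mu_k|_S/(1-\epsilon_k)\to\mu_0$. Second, since $\mu_0$ lies in the relative interior of $\Delta(S)$, with interior radius $r>0$, I realize the correction $\mu_k^S-\mu_0$ not by shifting the posteriors $\mu_j$ (which may leave the simplex) but by mixing: set $P_k^S=(1-\alpha_k)\sum_j\lambda_j\delta_{\mu_j}+\alpha_k\delta_{\sigma_k}$ with $\sigma_k=\mu_0+\alpha_k^{-1}(\mu_k^S-\mu_0)$ and $\alpha_k=\norm{\mu_k^S-\mu_0}/r\to 0$, which keeps $\sigma_k\in\Delta(S)$ and yields barycenter exactly $\mu_k^S$. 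Combining, $P_k=(1-\epsilon_k)P_k^S+\epsilon_k\delta_{\rho_k}$ has barycenter $\mu_k$, and because the corrective weights $\alpha_k,\epsilon_k$ vanish while $W$ stays bounded on the compact $\Delta(X)$, one checks $v_k=E_{P_k}[W]=(1-\epsilon_k)[(1-\alpha_k)v_0+\alpha_k W(\sigma_k)]+\epsilon_k W(\rho_k)\to v_0$.

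This establishes lhc, and together with uhc it gives continuity. The same construction covers interior priors trivially, where $R=\emptyset$ and only the mixing stage is used; the two-stage device is needed precisely to cope with the support drop at boundary priors, which is the crux of the argument. Equivalently, one could phrase lhc through the support function $\mu\mapsto\max_{v\in\mathcal{V}(\mu)}\langle d,v\rangle=\mathrm{cav}[\sum_i d_iV^i](\mu)$ and invoke continuity of the concave closure, but I expect the explicit splitting above to be the cleanest route since it produces the implementing $P_k$ directly.
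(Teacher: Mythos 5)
Your proof is correct, and in two places it takes a genuinely different route from the paper's. For compactness and convexity of the graph, the paper verifies boundedness, convexity (by mixing measures), and closedness (by extracting convergent subsequences from the finite-support representations supplied by \cref{lem:1}) as three separate steps; you instead identify $\mathrm{Gr}(\mathcal{V})=\mathrm{conv}(K)$ for $K$ the compact graph of $(V^1,\dots,V^n)$, which delivers both properties in one stroke. The only nontrivial ingredient is that the barycenter of a Borel probability measure on a compact $K\subset\mathbb{R}^{\abs{X}+n}$ lies in $\mathrm{conv}(K)$, which you note correctly; this is the cleaner argument and also re-derives the fiberwise conclusions of \cref{lem:1}. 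Upper hemicontinuity is handled the same way in both. For lower hemicontinuity the paper fixes a finite-support implementation of $v$ at $\mu$, represents it by a stochastic matrix $q_{jk}$, reuses the same kernel at the perturbed prior $\mu_m$, and controls the change by a derivative computation; that argument presupposes each $p_j>0$ and does not address states that carry mass under $\mu_m$ but are null under $\mu$ --- precisely the boundary case you isolate as the crux. Your two-stage construction (absorbing the new mass on $R=X\setminus\mathrm{supp}(\mu_0)$ into a separate atom, then realizing the residual correction by mixing in a single posterior at relative-interior distance $r$ so that no posterior leaves the simplex) treats that case explicitly, at the cost of a somewhat longer construction but with the benefit that no division by possibly vanishing probabilities ever occurs. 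Both approaches are legitimate; yours is, if anything, the more complete at boundary priors.
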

\begin{proof}${}$
	\begin{itemize}
		\item Boundedness: $\Delta(x)$ is a bounded set. $\forall \mu\in \Delta(X)$, $\mathcal{V}$ is uniformly bounded by radious $\max_{\mu\in \Delta(X),i}\abs{V^i(\mu)}$ by sup norm. So $\mathrm{Gr}(\mathcal{V})$ is bounded.
		\item Convexity: $\forall (\mu_1,v_1),(\mu_2,v_2)\in \mathrm{Gr}(\mathcal{V})$. $\forall \alpha\in[0,1]$. Since $\Delta(X)$ is convex, $\mu_{\alpha}=\alpha \mu_1+(1-\alpha)\mu_2\in \Delta(X)$. Now we prove that $v_{\alpha}=\alpha v_1+(1-\alpha)v_2\in \mathcal{V}(\mu_{\alpha})$. By definition, there exists $P_1,P_2\in \Delta^2(X)$ s.t. $E_{P_1}[\left( V^i \right)]=v_1$, $E_{P_1}[\nu]=\mu_1$ and $E_{P_2}\left[ (v^i) \right]=v_2$, $E_{P_2}[\nu]=\mu_2$. Define $P_{\alpha}=\alpha P_1+(1-\alpha)P_2$, then by linearity of expectation operator, $E_{P_{\alpha}}[\nu]=\alpha E_{P_1}[\nu]+(1-\alpha)E_{P_2}[\nu]=\mu_{\alpha}$. $E_{P_{\alpha}}\left[ (V^i) \right]=\alpha E_{P_1}[(V^i)]+(1-\alpha)E_{P_2}[(V^i)]=v_{\alpha}$. Therefore, $v_{\alpha}\in \mathcal{V}(\mu_{\alpha})$. So $(\mu_{\alpha},v_{\alpha})\in \mathrm{Gr}(\mathcal{V})$.
		\item Closedness: $\forall \left\{ (\mu_j,v_j) \right\}\subset \mathrm{Gr}(\mathcal{V})$, suppose $\mu_j\to \mu$, $v_j\to v$. Want to show that $\mu\in \Delta(X)$ and $v\in \mathcal{V}(\mu)$. First of all, since $\Delta(X)$ is complete, $\mu\in \Delta(X)$. Now by \cref{lem:1}, there exists $\left( p_j,\nu_j \right)$ such that:
			\begin{align*}
				\begin{cases}
					\sum_{k=1}^{(n+1)\abs{X}} p_j^k=1\\
					\sum_{k=1}^{(n+1)\abs{X}} p_j^k \nu_j^k=\mu_j\\
					\sum_{k=1}^{(n+1)\abs{X}} p_j^k V^i(\nu_j^k)=v_j^i
				\end{cases}
			\end{align*}
			Now since $p_j\in \Delta\left( (n+1)\abs{X} \right)$ and $\nu_j\in \Delta(X)$ are both compact spaces. Consider stadard Euclidean metric on product space $\Delta\left( (n+1)\abs{X} \right)\times \Delta(X)^{(n+1)\abs{X}}$, it is also compact. Therefore there exists convergincing subsequence $p_j\to p$ and $\nu_j^k\to \nu^k$. Then
			\begin{align*}
				\begin{cases}
					\sum_{k=1}^{(n+1)\abs{X}} p^k=\lim_{j\to\infty}\sum_{k=1}^{(n+1)\abs{X}} p_j^k=1\\
					\sum_{k=1}^{(n+1)\abs{X}} p^k \nu^k=\lim_{j\to\infty}\sum_{k=1}^{(n+1)\abs{X}} p_j^k \nu_j^k=\lim_{j\to\infty}\mu_j=\mu\\
					\sum_{k=1}^{(n+1)\abs{X}} p^k V^i(\nu^k)=\lim_{j\to\infty}\sum_{k=1}^{(n+1)\abs{X}} p_j^k V^i(\nu_j^k)=\lim_{j\to\infty}v^i_j=v^i
				\end{cases}
			\end{align*}
			Therefore, $(p,\nu)$ implements $v$ at $\mu$. So $v\in \mathcal{V}(\mu)$.
		\item Compactness: Since $\mathrm{Gr}(\mathcal{V})$ is closed and bounded, it is compact.
		\item Continuity: Since $\mathrm{Gr}(\mathcal{V})$ is compact, $\mathcal{V}(\mu)$ is upper hemicontinuous. Now we only need to show lower hemicontinuity. $\forall \left( \mu_m \right)\subset \Delta(X)$, $\mu_m\to \mu\in \Delta(X)$. $\forall v\in \mathcal{V}(\mu)$. By \cref{lem:1}, $v$ is impelemnted by $(p,\nu)$ with support size $(n+1)\abs{X}$. There exists a stochastic matrix $q_{jk}$ such that:
			\begin{align*}
				&\begin{cases}
					\nu_j=\frac{1}{\sum_k \mu_k q_{jk}}(\mu_1q_{j1},\dots,\mu_{-1}q_{j,-1})\\
					p_j=\sum_k \mu_k q_{jk}
				\end{cases}\\
				\implies&
				\begin{dcases}
					\frac{\partial p_j}{\mu_k}=q_{jk}\\
					\frac{\partial \nu_{jl}}{\partial \mu_k}=
					\begin{dcases}
						\frac{p_jq_{jl}-\mu_l q_{jl}^2}{p_j^2}&\text{when $k=l$}\\
						\frac{-\mu_lq_{jl}q_{jk}}{p_j^2}&\text{when $k\neq l$}
					\end{dcases}
				\end{dcases}
			\end{align*}
			Therefore, since each $p_j>0$, when $\mu_m$ is sufficiently close to $\mu$, corresponding $(p_m,\nu_m)$ will be bounded from $(p,\nu)$ by $\abs{\mu-\mu_m}$. By continuity of $V^i$, $v_m=\left(\sum p_m V^i(\nu_m)\right)\to \left(\sum p V^i(\nu)\right)=v$. Therefore, $\mathcal{V}(\mu)$ is both upper hemicontinuous and lower hemicontinuous.
	\end{itemize}
\end{proof}

\section{Main theorem}
\label{sec:thm}
\subsection{Existence and finite support}
\begin{thm}
	Let $X$ be a non-empty state space, $\left\{ V^i \right\}_{i=1}^{n}\subset C\Delta(X)$, $f\in C\mathbb{R}^n$. $\forall \mu\in \Delta(X)$, suppose $\mathcal{V}(\mu)\bigcap D(\mu)\neq \emptyset$, then there exists $P^*\in \Delta^2(X)$ solving \cref{eqn:P} and $\abs{\mathrm{supp}(P^*)}\le (n+1)\cdot\abs{X}$.
	\label{thm:1}
\end{thm}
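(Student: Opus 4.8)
The plan is to collapse \cref{eqn:P} into a finite-dimensional optimization over the possibility set $\mathcal{V}(\mu)$ and then read off the support bound directly from \cref{lem:1}. The first step is to observe that both the objective and the two constraints in \cref{eqn:P} depend on the signal structure $P$ only through the vector $v=\left(E_P[V^1],\dots,E_P[V^n]\right)$. By the definition of $\mathcal{V}(\mu)$, the set of such vectors attainable by some $P\in\Delta^2(X)$ satisfying $E_P[\nu]=\mu$ is exactly $\mathcal{V}(\mu)$, and the remaining constraint is precisely $v\in D(\mu)$. Hence \cref{eqn:P} is equivalent to the reduced program
\begin{align*}
	\sup_{v\in \mathcal{V}(\mu)\cap D(\mu)} f(v),
\end{align*}
in the sense that the two problems share the same value and any maximizer of the reduced program is implemented by some feasible $P$ (its defining $P$ automatically satisfies $E_P[\nu]=\mu$, and feasibility of $D$ is built into $v\in D(\mu)$).

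The second step is existence. By \cref{lem:1} the set $\mathcal{V}(\mu)$ is compact, and since $D(\mu)$ is closed valued, the feasible set $\mathcal{V}(\mu)\cap D(\mu)$ is a closed subset of a compact set, hence compact; it is nonempty by the hypothesis $\mathcal{V}(\mu)\cap D(\mu)\neq\emptyset$. Because $f$ is continuous, the Weierstrass extreme value theorem yields a maximizer $v^*\in \mathcal{V}(\mu)\cap D(\mu)$ of $f$.

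The final step is the support bound. The point $v^*$ lies in the $n$-dimensional compact convex set $\mathcal{V}(\mu)\subset\mathbb{R}^n$, so by the decomposition $\mathcal{V}(\mu)=\bigcup_{k\le n}\mathrm{ext}_k(\mathcal{V}(\mu))$ recorded in the preliminaries, $v^*\in\mathrm{ext}_k(\mathcal{V}(\mu))$ for some $k\le n$. Applying \cref{lem:1} to $v^*$ produces a $P^*\in\Delta^2(X)$ with $v^*=\left(E_{P^*}[V^1],\dots,E_{P^*}[V^n]\right)$ and $\abs{\mathrm{supp}(P^*)}\le(k+1)\abs{X}\le(n+1)\abs{X}$. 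This $P^*$ is feasible and attains $f(v^*)$, which is the value of \cref{eqn:P}, so $P^*$ solves the problem.

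I expect the substance to be entirely front-loaded into \cref{lem:1}: the genuinely difficult content—representing an arbitrary point of $\mathcal{V}(\mu)$ by a finitely supported structure with a dimension-dependent bound—is already established there. The only care required here is the reduction to the finite-dimensional program (verifying that the fiber over each $v$ is nonempty and that the objective is constant along it) and the uniform application of the bound via the fact that every element of $\mathcal{V}(\mu)$ is a $k$-extreme point for some $k\le n$, yielding $(k+1)\abs{X}\le(n+1)\abs{X}$. No topological input beyond compactness of $\mathcal{V}(\mu)$ and closedness of $D(\mu)$ is needed.
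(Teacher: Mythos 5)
Your proof is correct and follows essentially the same route as the paper: reduce \cref{eqn:P} to maximizing $f$ over the compact set $\mathcal{V}(\mu)\cap D(\mu)$, apply Weierstrass, and then use \cref{lem:1} to implement the maximizer with a finitely supported $P^*$. The only difference is that you make explicit the small step the paper leaves implicit---that an arbitrary $v^*\in\mathcal{V}(\mu)$ lies in $\mathrm{ext}_k(\mathcal{V}(\mu))$ for some $k\le n$, so the bound $(k+1)\abs{X}\le(n+1)\abs{X}$ applies---which is a welcome clarification but not a different argument.
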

\begin{proof}
	By definition of $\mathcal{V}(\mu)$, \cref{eqn:P} is equivalent to the following problem:
	\begin{align}
		\sup_{v\in D\bigcap \mathcal{V}(\mu)} f(v)\label{eqn:reduced}
	\end{align}
	By \cref{lem:1}, $\mathcal{V}(\mu)$ is a compact set. Then $\mathcal{V}(\mu)\bigcap D(\mu)$ is compact and non-empty. By Wierestrass's theorem, there exists  $v^*\in \mathcal{V}(\mu)\bigcap D(\mu)$ solving \cref{eqn:reduced}. Then by \cref{lem:1}, there exists $P^*\in \Delta^2(X)$ s.t. $v^*=\left( E_{P^*}[V^1],\dots, E_{P^*}[V^n] \right)$ and $\abs{\mathrm{supp}(P^*)}\le (n+1)\cdot \abs{X}$. Therefore, $P^*$ solves \cref{eqn:P}.
\end{proof}

\subsection{Necessary condition of optimizer}
\begin{thm}
	Let $X$ be a non-empty state space, $\left\{ V^i \right\}_{i=1}^{n}\subset C\Delta(X)$, $f:\mathbb{R}^n\to\mathbb{R}$ is differentiable. Let $D\equiv\mathbb{R}^n$. Then a necessary condition for $P^*$ solving \cref{eqn:P} is:
	\begin{align}
		P^*\in\arg\max_{\substack{P\in \Delta^2(X)\\E_P[\nu]=\mu}}\nabla f(E_{P^*}[V^1],\dots,E_{P^*}[V^n])\cdot\left( E_P[V^1],\dots,E_P[V^n] \right)\label{eqn:foc}
	\end{align}
	\label{thm:foc}
\end{thm}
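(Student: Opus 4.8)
The plan is to pass to the finite-dimensional reformulation used in the proof of \cref{thm:1}, and then run the standard variational argument on the convex body $\mathcal{V}(\mu)$. Writing $v^* = (E_{P^*}[V^1],\dots,E_{P^*}[V^n])$, the definition of $\mathcal{V}(\mu)$ shows that $P^*$ solving \cref{eqn:P} is equivalent to $v^*$ solving $\sup_{v\in\mathcal{V}(\mu)} f(v)$. So the whole argument reduces to establishing the first-order condition $v^*\in\arg\max_{v\in\mathcal{V}(\mu)}\nabla f(v^*)\cdot v$, after which I translate the conclusion back to the level of $P$.

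To prove this first-order condition I would argue by contradiction. Suppose there is $v'\in\mathcal{V}(\mu)$ with $\nabla f(v^*)\cdot(v'-v^*)>0$. Since $\mathcal{V}(\mu)$ is convex by \cref{lem:1}, the segment $v_t=(1-t)v^*+tv'$ stays in $\mathcal{V}(\mu)$ for all $t\in[0,1]$. Define $h(t)=f(v_t)$; as $f$ is differentiable and $t\mapsto v_t$ is affine, the chain rule gives $h'(0)=\nabla f(v^*)\cdot(v'-v^*)>0$. Hence $f(v_t)>f(v^*)$ for all sufficiently small $t>0$, contradicting the optimality of $v^*$. Therefore $v^*$ maximizes the linear functional $v\mapsto\nabla f(v^*)\cdot v$ over $\mathcal{V}(\mu)$ (the case $\nabla f(v^*)=0$ being trivial, since then the functional is constant).

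Finally I would translate this statement into \cref{eqn:foc}. Every feasible $P$ (i.e. with $E_P[\nu]=\mu$) induces the point $(E_P[V^1],\dots,E_P[V^n])\in\mathcal{V}(\mu)$, and conversely every point of $\mathcal{V}(\mu)$ arises this way, so $\sup_{P:E_P[\nu]=\mu}\nabla f(v^*)\cdot(E_P[V^1],\dots,E_P[V^n])=\max_{v\in\mathcal{V}(\mu)}\nabla f(v^*)\cdot v$. Since $P^*$ attains $v^*$, which I have just shown to maximize the linear functional, $P^*$ attains this common optimal value and thus lies in the $\arg\max$ of \cref{eqn:foc}.

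I expect no serious obstacle here: the content is the classical fact that the gradient supports the feasible set at an interior optimum of $f$. The only point that genuinely matters is the convexity of $\mathcal{V}(\mu)$ from \cref{lem:1}, which guarantees that the feasible direction $v'-v^*$ can be followed while remaining feasible; the infinite-dimensional nature of $\Delta^2(X)$ never enters, because the perturbation is carried out entirely in the finite-dimensional image $\mathcal{V}(\mu)$ rather than on $P^*$ itself.
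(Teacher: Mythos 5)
Your proposal is correct and follows essentially the same route as the paper: reduce to the finite-dimensional problem $\sup_{v\in\mathcal{V}(\mu)}f(v)$, then derive a contradiction from a feasible direction $v'-v^*$ with positive directional derivative, using the convexity of $\mathcal{V}(\mu)$ from \cref{lem:1}. Your explicit translation back to the level of $P$ and your remark on the trivial case $\nabla f(v^*)=0$ are slightly more careful than the paper's write-up, but the argument is the same.
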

\begin{proof}
	Solving \cref{eqn:P} is equivalent to solving \cref{eqn:reduced}. Suppose by contradiction that \cref{eqn:foc} is violated at optimal $P^*$. Then it is equivalently saying that there exists $v\in \mathcal{V}(\mu)$ such that:
	\begin{align*}
		\nabla f(v^*)\cdot v^*<\nabla f(v^*)\cdot v
	\end{align*}
	By \cref{lem:1}, $\mathcal{V}(\mu)$ is a convex set. Therefore $v_{\alpha}=(1-\alpha)v^*+\alpha v\in \mathcal{V}(\mu)$. Consider $h(\alpha)=f(v_{\alpha})$. Then $h'(0)=\nabla f(v^*)\cdot(v-v^*)>0$. So there exists $\alpha'>0$ s.t. $h(\alpha')>h(0)$. Then $f(v^*)< f(v_{\alpha'})$. Contradicting optimality of $v^*$.
\end{proof}

\begin{thm}
	Let $X$ be a non-empty state space, $\left\{ V^i \right\}_{i=1}^{n+m}\subset C\Delta(X)$, $f:\mathbb{R}^{n+m}\to\mathbb{R}$ is constant in last $m$ arguments. Let $D\equiv\left\{ v|v^i\ge 0\ \forall i>n \right\}$. Then there exists $P^*$ solving \cref{eqn:P} and $\lambda\in B_{m+n}$ such that:
	\begin{align*}
		P^*\in\arg\max_{\substack{P\in \Delta^2(X)\\E_P[\nu]=\mu}}E_{P}\left[ \sum\lambda^i V^i \right]
	\end{align*}
	\label{thm:lag}
\end{thm}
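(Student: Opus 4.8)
The plan is to run the same reduction used in \cref{thm:1} and \cref{thm:foc}: collapse \cref{eqn:P} to the finite-dimensional problem over $\mathcal{V}(\mu)$ by \cref{lem:1}, produce an optimizer, and then extract $\lambda$ from a first-order optimality condition at that optimizer. The feature to exploit is that $f$ is constant in its last $m$ arguments, so $\nabla f$ has vanishing last $m$ components and the only role of those coordinates is through the polyhedral constraint set $D=\{v:v^i\ge 0,\ i>n\}$; the multipliers on those coordinates will therefore come entirely from the constraints, while the first $n$ components of $\lambda$ will be (a scaling of) $\nabla f$.

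First I would invoke \cref{thm:1}. Since $f$ is continuous and $\mathcal{V}(\mu)\cap D\neq\emptyset$, there is a $P^*$ solving \cref{eqn:P} with $\abs{\mathrm{supp}(P^*)}\le(n+1)\abs{X}$; write $v^*=\left(E_{P^*}[V^1],\dots,E_{P^*}[V^{n+m}]\right)\in\mathcal{V}(\mu)\cap D$ for the point it implements, so that $v^*$ maximizes $f$ over $\mathcal{V}(\mu)\cap D$. By \cref{lem:1}, $\mathcal{V}(\mu)$ is convex and compact, and $D$ is a closed convex polyhedron, so the reduced feasible set is a convex compact body and $v^*$ is a genuine constrained maximizer of the differentiable $f$ over it.

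Next I would write the first-order condition at $v^*$, mirroring the unconstrained step carried out in \cref{thm:foc}. Optimality says there is no direction that simultaneously increases $f$, stays feasible for the active constraints ($d^i\ge 0$ whenever $v^*_i=0$, $i>n$), and is tangent to $\mathcal{V}(\mu)$ at $v^*$. Separating the corresponding convex cones (a Gordan/Motzkin-type alternative, using the normal cone $N_{\mathcal{V}(\mu)}(v^*)=\{w:w\cdot(v-v^*)\le 0\ \forall v\in\mathcal{V}(\mu)\}$ as the abstract-constraint multiplier set) yields $\lambda_0\ge 0$ and $\eta^i\ge 0$ for $i>n$, not all zero, with complementary slackness $\eta^i v^*_i=0$, such that
\[
\lambda:=\lambda_0\,\nabla f(v^*)+\sum_{i>n}\eta^i e_i\in N_{\mathcal{V}(\mu)}(v^*),
\]
where $e_i$ is the $i$th standard basis vector. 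Keeping the objective multiplier $\lambda_0$ is exactly what removes the need for a constraint qualification on the possibly non-smooth body $\mathcal{V}(\mu)$, and it is why $\lambda$ is only required to lie in the ball $B_{m+n}$ (which contains $0$) rather than being forced nonzero.

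Finally I would read off the conclusion. The components of $\lambda$ are $\lambda^i=\lambda_0\,\partial_i f(v^*)$ for $i\le n$ and $\lambda^i=\eta^i\ge 0$ for $i>n$ (using $\partial_i f\equiv 0$ there), and $\lambda\in N_{\mathcal{V}(\mu)}(v^*)$ is exactly $v^*\in\arg\max_{v\in\mathcal{V}(\mu)}\lambda\cdot v$. By linearity of expectation $\lambda\cdot\left(E_P[V^1],\dots,E_P[V^{n+m}]\right)=E_P\left[\sum\lambda^i V^i\right]$, so this reads $P^*\in\arg\max_{E_P[\nu]=\mu}E_P\left[\sum\lambda^i V^i\right]$; since the $\arg\max$ is invariant under positive rescaling, I would then normalize $\lambda$ into $B_{m+n}$ (taking $\lambda=0$ in the degenerate case where the gradient vanishes and all constraints are slack). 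The step I expect to be the main obstacle is making the separation/Fritz–John argument rigorous for the \emph{abstract} convex constraint $v\in\mathcal{V}(\mu)$—i.e.\ replacing a smooth constraint representation by membership in $N_{\mathcal{V}(\mu)}(v^*)$ and ensuring the relevant cones are closed so that the alternative theorem applies; the convexity and compactness of $\mathcal{V}(\mu)$ from \cref{lem:1} together with the polyhedrality of $D$ are precisely the inputs that secure this.
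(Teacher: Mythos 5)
Your proof rests on $\nabla f(v^*)$ existing: the whole construction of $\lambda$ is $\lambda_0\nabla f(v^*)+\sum_{i>n}\eta^i e_i$, and you explicitly call $f$ "differentiable." But \cref{thm:lag} does not assume differentiability of $f$ — contrast it with \cref{thm:foc}, which does. The only structural assumption here is that $f$ is constant in its last $m$ arguments, and that assumption plays no essential role in your argument (you use it only to note $\partial_i f\equiv 0$ for $i>n$, which the conclusion does not even require since $\lambda$ is merely asked to lie in $B_{m+n}$). So as written you prove a different statement: a KKT/Fritz--John characterization for differentiable $f$, not the theorem as stated. This is a genuine gap, and it is compounded by the technical issue you yourself flag — decomposing $N_{\mathcal{V}(\mu)\cap D}(v^*)$ as $N_{\mathcal{V}(\mu)}(v^*)+N_D(v^*)$ (equivalently, closedness of the sum of the polars) needs an argument, since $\mathcal{V}(\mu)$ is an abstract convex body with no constraint qualification available.

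The paper's proof is gradient-free and uses the "constant in the last $m$ arguments" hypothesis as the engine: starting from the optimal value vector $v^*=v_0$, it moves along the ray $v_\alpha=v^*+\alpha(0,\dots,0,1,\dots,1)$, along which $f$ is constant and membership in $D$ is preserved; by boundedness of $\mathcal{V}(\mu)$ the ray must exit, so some $v_\alpha$ lies on $\partial\mathcal{V}(\mu)$, where convexity gives a supporting linear functional $l=\sum\lambda^i v^i$ maximized at $v_\alpha$ over all of $\mathcal{V}(\mu)$. The $P_\alpha$ implementing $v_\alpha$ (via \cref{lem:1}) is then itself an optimizer of \cref{eqn:P}, because $f(v_\alpha)=f(v^*)$. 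Note that the $P^*$ produced is generally \emph{not} the one you started with — the existential form of the theorem is used. If you want to salvage your route, you would either need to add differentiability to the hypotheses or replace $\nabla f(v^*)$ by a separation argument between $\mathcal{V}(\mu)\cap D$ and a superlevel set of $f$, which is essentially what the paper does in \cref{thm:convex} under quasi-concavity; for the bare hypotheses of \cref{thm:lag}, the boundary-ray construction is the argument that actually works.
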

\begin{proof}
	$\forall P^*$ solving \cref{eqn:P}, let $v^*$ be corresponding value. Define:
	\begin{align*}
		v_{\alpha}=v^*+\alpha(\underbrace{0,\dots,0}_{n},\underbrace{1,\dots,1}_m)
	\end{align*}
	Then by definition $f\left( v_{\alpha} \right)=f(v^*)$. $v_0=v^*\in \mathcal{V}(\mu)$. Since $\mathcal{V}(\mu)$ is bounded, for large enough $\alpha$, $v_{\alpha}\not\in \mathcal{V}(\mu)$. Then since $\mathcal{V}(\mu)$ is compact, there exists $\alpha$ s.t. $v_{\alpha}\in \partial\mathcal{V}(\mu)$. Since $\mathcal{V}(\mu)$ is convex, there exists $l\in L(\mathbb{R}^{m+n})$ s.t. $v_{\alpha}\in\arg\max_{v\in \mathcal{V}(\mu)} l(v)$. Let $l=\sum \lambda^i v^i$, then:
	\begin{align*}
		v_{\alpha}\in\arg\max_{v\in \mathcal{V}(\mu)}\sum \lambda^i v^i
	\end{align*}
	Let $P_{\alpha}$ be the corresponding information structure implementing $v_{\alpha}$ (existence of $P_{\alpha}$ guaranteed by \cref{lem:1}). Then
	\begin{align*}
		P_{\alpha}\in\arg\max_{\substack{P\in \Delta^2(X)\\E_P[\nu]=\mu}}E_{P}\left[ \sum\lambda^i V^i \right]
	\end{align*}
	Since $f(v_{\alpha})=f(v^*)$, $P_{\alpha}$ solves \cref{eqn:P} as well.
\end{proof}

\subsection{Convex optimization}

\begin{thm}
	Let $X$ be a non-empty state space, $\left\{ V^i \right\}_{i=1}^n\subset C\Delta(X)$, $D\equiv\left\{ v|g(v)\ge 0 \right\}$. If both $f$ and $g$ are quasi-concave and continuous, then there exists $P^*$ solving \cref{eqn:P}, $v^*=(E_P[V^i])$  and $\lambda\in B_n$ such that:
	\begin{align*}
		\begin{dcases}
			P^*\in\arg\max_{\substack{P\in \Delta^2(X)\\E_P[\nu]=\mu}}E_{P}\left[ \sum\lambda^i V^i \right]\\
			v^*\in\arg\min_{f(v)\ge f(v^*),v\in D}\lambda\cdot v
		\end{dcases}
	\end{align*}
	\label{thm:convex}
\end{thm}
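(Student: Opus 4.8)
The plan is to recast \cref{eqn:P} as the finite-dimensional program \cref{eqn:reduced}, namely $\max_{v\in \mathcal{V}(\mu)\cap D}f(v)$, and then to read the multiplier $\lambda$ off a hyperplane separating the value set $\mathcal{V}(\mu)$ from the set of points that are weakly better and feasible at the optimum. First I would record the ingredients. By \cref{lem:1}, $\mathcal{V}(\mu)$ is compact and convex; since $g$ is continuous, $D=\{v:g(v)\ge 0\}$ is closed, so $\mathcal{V}(\mu)\cap D$ is compact and, by hypothesis, non-empty, and Weierstrass supplies a maximizer $v^*$ with value $\bar f=f(v^*)$. Setting $U=\{v:f(v)\ge \bar f\}\cap\{v:g(v)\ge 0\}$, quasi-concavity of $f$ and of $g$ makes each upper-contour set convex, so $U$ is convex and closed, and $v^*\in U\cap \mathcal{V}(\mu)$.

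The heart of the argument is to separate the two convex sets $\mathcal{V}(\mu)$ and $U$ by a hyperplane through their common point $v^*$. Optimality of $v^*$ means that no point of $\mathcal{V}(\mu)$ can lie in the open convex set $\{v:f(v)>\bar f\}\cap\{v:g(v)>0\}$, since such a point would be strictly feasible and strictly better; this is the mechanism that keeps the relative interiors of $\mathcal{V}(\mu)$ and $U$ from overlapping and hence yields a proper separating hyperplane. Concretely, I would produce $\lambda\neq 0$ and a scalar $c$ with $\lambda\cdot v\le c$ for all $v\in \mathcal{V}(\mu)$ and $\lambda\cdot w\ge c$ for all $w\in U$. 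Because $v^*$ belongs to both sets, taking $v=v^*$ and $w=v^*$ forces $c=\lambda\cdot v^*$. Thus $v^*\in\arg\max_{v\in \mathcal{V}(\mu)}\lambda\cdot v$ and $v^*\in\arg\min_{v\in U}\lambda\cdot v$, and since $U=\{v:f(v)\ge f(v^*),\,v\in D\}$, the latter is exactly the second required condition.

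Next I would convert the maximization over $\mathcal{V}(\mu)$ into the statement about $P^*$. By linearity of expectation $\lambda\cdot E_P[(V^i)]=E_P[\sum_i\lambda^i V^i]$, and $\mathcal{V}(\mu)=\{E_P[(V^i)]:E_P[\nu]=\mu\}$, so $v^*\in\arg\max_{v\in \mathcal{V}(\mu)}\lambda\cdot v$ says precisely that any feasible $P$ implementing $v^*$ maximizes $E_P[\sum_i\lambda^i V^i]$. By \cref{thm:1} (equivalently \cref{lem:1}), there is such a $P^*$ with finite support that implements $v^*$ and solves \cref{eqn:P}; this is the desired information structure, satisfying the first displayed condition. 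Finally I would rescale $\lambda$ by its norm so that $\lambda\in B_n$, which changes neither the $\arg\max$ nor the $\arg\min$.

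The main obstacle is the existence of the separating hyperplane through $v^*$, i.e.\ the disjointness of the relative interiors of $\mathcal{V}(\mu)$ and $U$. Because quasi-concavity yields only weak inequalities, the level set $\{f=\bar f\}$ may be flat and the strict set $\{f>\bar f\}\cap\{g>0\}$ may be empty or may fail to have $v^*$ in its closure, so a naive strict-separation argument does not close. When the relative interiors genuinely meet, I would instead choose $v^*$ inside $\mathrm{ri}(\mathcal{V}(\mu))$ (still optimal, as $f(v^*)=\bar f$), which forces any admissible $\lambda$ to be orthogonal to the affine hull of $\mathcal{V}(\mu)$; such a $\lambda$, possibly $0\in B_n$, trivially satisfies both conditions. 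Pinning down that every such degenerate configuration is covered by a suitable choice of $\lambda\in B_n$ is where the delicate work lies; the reduction, the translation via linearity, and the appeal to \cref{lem:1} are otherwise routine.
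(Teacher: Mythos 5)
Your proposal follows the same route as the paper: reduce \cref{eqn:P} to maximizing $f$ over the compact convex set $\mathcal{V}(\mu)\cap D$ via \cref{lem:1} and \cref{thm:1}, then manufacture $\lambda$ as the normal of a hyperplane separating $\mathcal{V}(\mu)$ from the ``at least as good and feasible'' region, and translate back through linearity of the expectation. The one substantive difference is \emph{which} set you separate. The paper separates $\mathcal{V}(\mu)$ from the strict set $S=\left\{ v\in D \mid f(v)>f(v^*) \right\}$: optimality makes $S$ and $\mathcal{V}(\mu)$ disjoint, both are convex by quasi-concavity, so the ordinary separation theorem for disjoint convex sets applies with no relative-interior hypothesis, and the hyperplane is then pinned to $v^*$ by arguing $v^*\in \mathrm{cl}(S)$ and invoking continuity. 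You instead separate from the weak set $U=\left\{ v\in D \mid f(v)\ge f(v^*) \right\}$, which meets $\mathcal{V}(\mu)$ at $v^*$, so you are forced into proper separation and the disjoint-relative-interiors condition that you correctly flag as the sticking point; switching to $S$ dissolves that difficulty in the generic case. In partial compensation, your version (when it closes) delivers the minimization of $\lambda\cdot v$ over all of $U$, which is literally the theorem's second condition, whereas separating from $S$ only gives it on $\mathrm{cl}(S)$ and tacitly needs $U\subseteq \mathrm{cl}(S)$. Finally, the degenerate configurations you worry about --- $S$ empty, or $v^*\notin\mathrm{cl}(S)$ (e.g.\ $f$ locally constant at its constrained maximum) --- afflict the paper's argument equally, since its claim that $v^*\in\mathrm{cl}(S)$ is asserted rather than proved; your fallback of taking $\lambda=0\in B_n$, or $\lambda$ orthogonal to the affine hull of $\mathcal{V}(\mu)$, is a legitimate patch for both versions. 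So: same approach, a defensible variant in the separation step, and the residual gap you honestly flag is one the paper itself shares.
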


\begin{proof}
	First, by \cref{thm:1}, $P^*$ solving \cref{eqn:P} exists. Then by optimality of $P^*$: $$\mathcal{V}(\mu)\bigcap \left\{ v|v\in D,f(v)>f(v^*) \right\}=\emptyset$$
	Since $f$ and $g$ are quasi-convex, $\left\{ v|v\in D,f(v)> f(v^*) \right\}$ is a convex set. Then by separating hyperplane theorem, there exists $c$ and $\lambda$ s.t. $\forall v\in \mathcal{V}(\mu)$, $v'\in D$ and $f(v')> f(v^*)$:
	\begin{align*}
		\lambda\cdot v \le c\text{ and }\lambda\cdot v'>c
	\end{align*}
	By continuity of $f$ and $g$, $v^*\in \mathrm{cl}\left( \left\{ v|v\in D,f(v)> f(v^*) \right\} \right)$. So $\lambda\cdot v^*=c$. Then it is easy to verify that $\lambda$ satisfies the conditions in \cref{thm:convex}.
\end{proof}

\begin{cor}
	\label{cor:convex}
		Let $X$ be a non-empty state space, $\left\{ V^i \right\}_{i=1}^n\subset C\Delta(X)$, $f:\mathbb{R}^n\to \mathbb{R}$ is quasi-concave. Let $D\equiv\left\{ v|g(v)\ge 0 \right\}$, $g$ is quasi-concave. If $f$ and $g$ are both differentiable, then there exists $P^*$ solving \cref{eqn:P}, $v^*=(E_P[V^i])$ and $\gamma,\eta\ge 0$ such that:
	\begin{align*}
			P^*\in\arg\max_{\substack{P\in \Delta^2(X)\\E_P[\nu]=\mu}}\left( \eta\nabla f(v^*)+\gamma \cdot \mathrm{J}g(v^*) \right)\cdot \left( E_P[V^1],\cdots,E_P[V^n] \right)\\
	\end{align*}
\end{cor}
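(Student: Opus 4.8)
The plan is to build directly on \cref{thm:convex}. Applying that theorem to the present quasi-concave $f$ and $g$ yields an optimizer $P^*$ of \cref{eqn:P} with value $v^*=(E_{P^*}[V^i])$ together with a multiplier $\lambda\in B_n$ such that
\begin{align*}
  P^*\in\arg\max_{\substack{P\in\Delta^2(X)\\ E_P[\nu]=\mu}}E_P\Big[\sum\lambda^iV^i\Big]
  \quad\text{and}\quad
  v^*\in\arg\min_{\substack{f(v)\ge f(v^*)\\ v\in D}}\lambda\cdot v.
\end{align*}
The first condition is already the maximization that appears in the conclusion, provided I can show $\lambda=\eta\nabla f(v^*)+\gamma\cdot\mathrm{J}g(v^*)$ for some $\eta,\gamma\ge0$. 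So the whole problem reduces to a finite-dimensional statement in $\mathbb{R}^n$: decompose the normal vector $\lambda$ of the minimization problem into a nonnegative combination of the active constraint gradients.

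To carry out that decomposition I would work entirely with the convex feasible set $C=\{v:f(v)\ge f(v^*),\ g(v)\ge0\}$, on whose boundary $v^*$ lies, and whose convexity comes from quasi-concavity of $f$ and $g$. The minimization property reads $\lambda\cdot(v-v^*)\ge0$ for every $v\in C$. Here differentiable quasi-concavity is the essential tool: for a differentiable quasi-concave $h$, $h(v)\ge h(v^*)$ implies $\nabla h(v^*)\cdot(v-v^*)\ge0$. Applying this to $f$ (whose constraint is active, since $f(v^*)=f(v^*)$) and to the components of $g$ with $g(v^*)=0$, I obtain that every feasible direction into $C$ has nonnegative inner product with $\nabla f(v^*)$ and with the active rows of $\mathrm{J}g(v^*)$. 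A Farkas-type theorem of the alternative then lets me write $\lambda=\eta\nabla f(v^*)+\gamma\cdot\mathrm{J}g(v^*)$ with $\eta,\gamma\ge0$, where complementary slackness forces $\gamma=0$ on any inactive component (there $g(v^*)>0$ by continuity).

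Substituting this decomposition back into the first condition gives
\begin{align*}
  P^*\in\arg\max_{\substack{P\in\Delta^2(X)\\ E_P[\nu]=\mu}}\big(\eta\nabla f(v^*)+\gamma\cdot\mathrm{J}g(v^*)\big)\cdot\big(E_P[V^1],\dots,E_P[V^n]\big),
\end{align*}
which is exactly the claim. I expect the main obstacle to be the legitimacy of the Farkas step: passing from ``$\lambda\cdot d\ge0$ on the cone of feasible directions into $C$'' to ``$\lambda\cdot d\ge0$ on the full linearized cone cut out by the gradient inequalities'' is precisely where a constraint qualification is required. The cleanest way to sidestep this is to argue in Fritz--John form, allowing a nonnegative multiplier on the (linear) objective and then verifying it can be normalized to the stated form; alternatively one uses quasi-concavity to show that the tangent cone of $C$ and the linearized cone coincide, so that differentiability alone suffices. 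Either route preserves $\eta,\gamma\ge0$ and completes the proof.
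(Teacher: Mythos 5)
Your proposal takes essentially the same route as the paper: apply \cref{thm:convex} to obtain $\lambda$ and the dual problem $v^*\in\arg\min_{f(v)\ge f(v^*),\,v\in D}\lambda\cdot v$, then derive $\lambda=\eta\nabla f(v^*)+\gamma\cdot\mathrm{J}g(v^*)$ with $\eta,\gamma\ge 0$ from the first-order (Kuhn--Tucker) conditions of that convex program and substitute back into the maximization. The constraint-qualification worry you raise at the Farkas step is legitimate, but the paper's own proof simply invokes the ``Kuhn--Tucker condition'' without addressing it, so your version is if anything the more careful one.
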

\begin{proof}
	By \cref{thm:convex}:
	\begin{align}
		v^*\in \arg\min_{f(v)\ge f(v^*),v\in D}\lambda\cdot v\label{eqn:P:dual}
	\end{align}
	It is easy to verify that \cref{eqn:P:dual} as a dual problem is a convex optimization problem. Since both $f$ and $g$ are differentiable, by Kuhn-Tucker condition, there exists $\gamma,\eta\ge 0$ such that:
	\begin{align*}
		\lambda-\eta\cdot\nabla f(v^*)-\gamma\cdot \mathrm{J}g(v^*)=0
	\end{align*}
	Then by definition of $\lambda$:
	\begin{align*}
			P^*\in\arg\max_{\substack{P\in \Delta^2(X)\\E_P[\nu]=\mu}}\left( \eta\nabla f(v^*)+\gamma \cdot \mathrm{J}g(v^*) \right)\cdot \left( E_P[V^1],\cdots,E_P[V^n] \right)\\
	\end{align*}
\end{proof}

\subsection{Maximum theorem}
\begin{thm}
	Let $X$ be a non-empty state space, $\left\{ V^i \right\}_{i=1}^{n}\subset C\Delta(X)$, $f\in C\mathbb{R}^n$. Suppose $D(\mu)$ is a continuous correspondence and $\forall \mu\in \Delta(X)$, $\mathcal{V}(\mu)\bigcap D(\mu)\neq \emptyset$. Let $\kappa(\mu)$ be maximum in \cref{eqn:P} and $\mathcal{P}(\mu)$ be solution to $\cref{eqn:P}$, then $\kappa(\mu)$ is continuous and $\mathcal{P}(\mu)$ is compact-valued and upper hemicontinuous\footnote{with respect to Prokhorov metric.}.
	\label{thm:maximum}
\end{thm}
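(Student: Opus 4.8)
The plan is to recognize this as an instance of Berge's Maximum Theorem applied to the reduced finite-dimensional problem \cref{eqn:reduced}, and then to lift the conclusions from the level of values $v$ back to the level of information structures $P$. First I would rewrite the problem as $\kappa(\mu)=\max_{v\in\Gamma(\mu)}f(v)$, where $\Gamma(\mu)=\mathcal{V}(\mu)\cap D(\mu)$ is the feasible-value correspondence; this is legitimate because \cref{thm:1} guarantees the maximum is attained and equals the supremum in \cref{eqn:P}. The objective $f$ is continuous by hypothesis, so once $\Gamma$ is shown to be a continuous, compact-valued correspondence, Berge's theorem delivers continuity of $\kappa$ together with upper hemicontinuity and compact-valuedness of the value-argmax correspondence $V^*(\mu)=\arg\max_{v\in\Gamma(\mu)}f(v)$.

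Establishing the properties of $\Gamma$ proceeds componentwise. Compact-valuedness is immediate: $\mathcal{V}(\mu)$ is compact by \cref{lem:1} and $D(\mu)$ is closed by assumption, so the intersection is compact, and it is nonempty by hypothesis. For upper hemicontinuity I would use the closed-graph characterization: $\mathrm{Gr}(\mathcal{V})$ is closed by \cref{lem:closegraph} and $\mathrm{Gr}(D)$ is closed by continuity of $D$, hence $\mathrm{Gr}(\Gamma)=\mathrm{Gr}(\mathcal{V})\cap\mathrm{Gr}(D)$ is closed; since every $\Gamma(\mu)$ lies in the fixed compact ball of radius $\max_{\mu,i}\abs{V^i(\mu)}$, a closed graph with compact range yields upper hemicontinuity.

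The hard part, and the step I expect to be the main obstacle, is lower hemicontinuity of $\Gamma$, which is exactly what drives lower semicontinuity of $\kappa$. Lower hemicontinuity is in general \emph{not} preserved under intersection even when $\mathcal{V}$ and $D$ are individually lower hemicontinuous, so I cannot simply invoke ``intersection of continuous correspondences.'' Concretely, given $\mu_m\to\mu$ and an optimal $v^*\in\Gamma(\mu)$, I must produce feasible $v_m\in\mathcal{V}(\mu_m)\cap D(\mu_m)$ with $v_m\to v^*$, after which $\kappa(\mu_m)\ge f(v_m)\to f(v^*)=\kappa(\mu)$ gives the required lower bound. I would try to build these $v_m$ by combining the explicit lower-hemicontinuity construction for $\mathcal{V}$ from the proof of \cref{lem:closegraph}, namely the stochastic-matrix perturbation of the prior that produces a nearby value in $\mathcal{V}(\mu_m)$ for each nearby prior, with a lower-hemicontinuous selection from $D$. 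I expect this to require a transversality or relative-interior qualification so that a single approximating sequence can be kept simultaneously inside both sets; bare continuity of $D$ may not suffice, and isolating the exact condition under which $v_m$ can be chosen jointly feasible is the crux of the argument.

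Finally I would lift the statement to the $P$-level. Since $\kappa$ depends only on $v$, its continuity is already settled. Writing $\mathcal{P}(\mu)=\{P\in\Delta^2(X):E_P[\nu]=\mu,\ E_P[(V^i)]\in D(\mu),\ f(E_P[(V^i)])=\kappa(\mu)\}$, compactness follows because $\Delta^2(X)$ is compact in the Prokhorov metric by \cref{lem:1} and $\mathcal{P}(\mu)$ is a closed subset of it. Upper hemicontinuity is then a routine closed-graph argument: if $\mu_m\to\mu$ and $P_m\in\mathcal{P}(\mu_m)$, extract a weak-$*$ convergent subsequence $P_m\to P$; continuity of each $V^i$ gives $E_{P_m}[(V^i)]\to E_P[(V^i)]$ and $E_{P_m}[\nu]\to E_P[\nu]=\mu$, while continuity of $\kappa$ and $f$ together with closedness of $\mathrm{Gr}(D)$ force $f(E_P[(V^i)])=\kappa(\mu)$ with $E_P[(V^i)]\in\Gamma(\mu)$, so that $P\in\mathcal{P}(\mu)$.
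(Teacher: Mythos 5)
Your proposal follows the same route as the paper: reduce \cref{eqn:P} to $\max_{v\in\mathcal{V}(\mu)\cap D(\mu)}f(v)$, apply Berge's maximum theorem to get continuity of $\kappa$ and upper hemicontinuity of the value-argmax correspondence, and then lift to $\mathcal{P}(\mu)$ by a weak-$^*$ compactness and closed-graph argument; your $P$-level step is essentially identical to the paper's. The one place you diverge is the one place you stop short: lower hemicontinuity of the intersection $\Gamma(\mu)=\mathcal{V}(\mu)\cap D(\mu)$. Here your caution is in fact better grounded than the paper's treatment. The paper simply asserts that because $\mathcal{V}$ and $D$ are both continuous, their intersection is continuous; as you correctly point out, lower hemicontinuity is not preserved under intersection in general (a standard counterexample: a fixed closed half-plane intersected with a segment that rotates into its boundary is continuous in each factor but fails lower hemicontinuity of the intersection at the tangency). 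Closing this step genuinely requires an extra argument --- e.g.\ an interiority/constraint-qualification condition on $D(\mu)$ relative to $\mathcal{V}(\mu)$, or exploiting convexity of both sets together with nonemptiness of $\mathcal{V}(\mu)\cap\mathrm{int}\,D(\mu)$ --- none of which is supplied by the theorem's hypotheses as stated. So you have not produced a complete proof, but you have correctly isolated the missing idea, and it is missing from the paper's own proof as well; the remaining pieces of your argument (compact-valuedness of $\Gamma$, upper hemicontinuity via closed graph plus uniform boundedness, and the transfer of compactness and upper hemicontinuity from $V^*(\mu)$ to $\mathcal{P}(\mu)$ using Prokhorov compactness of $\Delta^2(X)$) are all sound and match the paper.
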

\begin{proof}
	\cref{thm:maximum} is an application of maximum theorem. Since by \cref{lem:closegraph} $\mathcal{V}(\mu)$ and $D(\mu)$ are both continuous, $\mathcal{V}(\mu)\bigcap D(\mu)$ is non-empty, compact valued and continuous. \cref{eqn:P} is equivalent to maximizing $f(v)$ on $\mathcal{V}(\mu)\bigcap D(\mu)$. Therefore, by maximum theorem, $\kappa(\mu)$ is continuous and the argmax correspondence $V^*(\mu)$ is comapct-valued and upper hemicontinuous.\par
	Now we show that $\mathcal{P}(\mu)$ is compact valued and upper hemicontinuous.
	\begin{itemize}
		\item compactness: (sequential comapctness will be sufficient) $\forall \left\{ P_m \right\}\subset \mathcal{P}(\mu)$, consider $v_m=E_{P_m}\left[ (V^i) \right]$. Then $v_m\in V^*(\mu)$, so there exists subsequence (without loss assume to be $v)m$ itself) $v_{m}\to v\in V^*(\mu)$. Then since $\Delta^2(X)$ is compact by \cref{thm:prok}, there exists subsequence $P_m\xrightarrow{w-^*} P\in \Delta^2(X)$. Then $E_{P}[(V^i)]=\lim E_{P_m}[(V^i)]=\lim v_m=v\in V^*(\mu)$. So $P\in \mathcal{P}(\mu)$.
		\item upper hemicontinuity: $\forall \mu_m\to \mu$, $P_m\xrightarrow{w-^*}P$ and $P_m\in \mathcal{P}(\mu_m)$. Then $v_m=E_{P_m}[(V^i)]\in V^*(\mu_m)$. By definition of w-$^*$ convergence, $v_m\to v=E_P[(V^i)]$. By upper hemicontinuity of $V^*(\mu)$, $v\in V^*(\mu)$. Therefore, $P\in \mathcal{P}(\mu)$.
	\end{itemize}
\end{proof}

\section{Applications}
\label{sec:app}
\subsection{Costly Information acquisition}
A direct application of \cref{thm:1} is to costly information acquisition problem. Consider a variant of rational inattention model. Decision utility at each belief is $F(\mu)=\max_{a}E_{\mu}[u(a,x)]$. Information measure of any experiment $P$ is $I(P|\mu)=E_P[H(\mu)-H(\nu)]$ where $H$ is the entropy function. Assume cost of experiments are convex in their measure, the decision problem can be written as:
\begin{align}
	\sup_{\substack{P\in D^2(X)\\E_P[\nu]=\mu}}E_P\left[ F(\nu) \right]-f\left( E_P\left[ H(\mu)-H(\nu) \right] \right)
	\label{eqn:RI}
\end{align}
In a standard rational inattention problem, $f$ is linear. Then standard concavification method suggests that optimal experiment involves signals no more than $\abs{X}$. The reason why we want to deviate from a linear $f$ is that standard RI has two kind of debatable predictions: 1) prior invariant choice of optimal posteriors (see \cite{caplin2013rational}). 2) no dynamics if we allow repeated experiments (see \cite{steiner2017rational}). However, when $f$ is more general, say convex, we knew little about how to solve \cref{eqn:RI}. \cref{thm:foc} becomes useful. 
\begin{prop}
	There exists $P^*$ solving \cref{eqn:RI}, $\abs{\mathrm{supp}(P^*)}=2\abs{X}$. Moreover, if $f$ is differentiable, $P^*$ solves:
	\begin{align*}
		P^*\in\arg \max_{\substack{P\in\Delta^2(X)\\E_P[\nu]=\mu}} E_{P}\left[ F(\nu)-f'\left( E_{P^*}[H(\mu)-H(\nu)] \right)\cdot H(\nu) \right]
	\end{align*}
	\label{prop:RI}
\end{prop}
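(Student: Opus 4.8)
The plan is to recognize \cref{eqn:RI} as the general program \cref{eqn:P} with $n=2$ and then read both conclusions off \cref{thm:1}, \cref{lem:1} and \cref{thm:foc}. Put $V^1(\nu)=F(\nu)$ and $V^2(\nu)=H(\mu)-H(\nu)$; both are continuous on $\Delta(X)$ (here $F$ is a max of affine maps and the entropy $H$ is continuous on the simplex), and $E_P[V^2]=E_P[H(\mu)-H(\nu)]$ is exactly the information measure $I(P|\mu)$. With $\Phi(a,b)=a-f(b)$, which is continuous, the objective of \cref{eqn:RI} equals $\Phi(E_P[V^1],E_P[V^2])$, so \cref{eqn:RI} is an instance of \cref{eqn:P} with $D\equiv\mathbb{R}^2$. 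Because $D\equiv\mathbb{R}^2$, the feasibility hypothesis $\mathcal{V}(\mu)\cap D(\mu)\neq\emptyset$ of \cref{thm:1} holds automatically, so an optimizer $P^*$, and an optimal value $v^*=(a^*,b^*)\in\mathcal{V}(\mu)$, exist.

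The one nonroutine point is sharpening the support bound: \cref{thm:1} with $n=2$ gives only $\abs{\mathrm{supp}(P^*)}\le 3\abs{X}$, whereas the claim is $2\abs{X}$. The key observation is that $\Phi$ is strictly increasing in its first argument, so at any optimum $a^*=\max\{a:(a,b^*)\in\mathcal{V}(\mu)\}$ --- otherwise one could raise $a$ at fixed $b^*$, remain in $\mathcal{V}(\mu)$, and strictly increase $\Phi$. Hence $v^*$ lies on the boundary of the planar convex body $\mathcal{V}(\mu)$, so it is a relative interior point of a face of dimension at most one, i.e.\ $v^*\in\mathrm{ext}_k(\mathcal{V}(\mu))$ with $k\le 1$. \cref{lem:1} applied at $v^*$ then produces $P^*$ with $v^*=(E_{P^*}[V^1],E_{P^*}[V^2])$ and $\abs{\mathrm{supp}(P^*)}\le(k+1)\abs{X}\le 2\abs{X}$, the claimed bound. (This refinement uses the full strength of \cref{lem:1} rather than \cref{thm:1}: the generic count $n+1$ is replaced by $k+1$ once one locates $v^*$ on a low-dimensional face.)

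For the first-order characterization, suppose $f$ is differentiable, so $\Phi$ is differentiable with $\nabla\Phi(a,b)=(1,-f'(b))$. At the optimum $b^*=I^*:=E_{P^*}[H(\mu)-H(\nu)]$, hence $\nabla\Phi(v^*)=(1,-f'(I^*))$. Feeding this gradient into \cref{thm:foc} gives $P^*\in\arg\max_{P}E_P[F(\nu)-f'(I^*)(H(\mu)-H(\nu))]$; since $H(\mu)$ is a prior-determined constant it drops out of the $\arg\max$, which reduces to the displayed condition. I expect this final step to be routine once the reduction is set up; the genuine obstacle is the boundary-and-face argument of the preceding paragraph, which is what improves the generic $3\abs{X}$ bound to the stated $2\abs{X}$.
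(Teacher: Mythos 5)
The paper offers no proof of \cref{prop:RI} at all --- it is presented as a ``direct application'' of \cref{thm:1}, with a remark that \cref{thm:foc} ``becomes useful'' --- so there is no argument to compare yours against line by line; your write-up supplies what the paper omits. Your reduction to \cref{eqn:P} with $V^1=F$, $V^2=H(\mu)-H(\cdot)$, $\Phi(a,b)=a-f(b)$ and $D\equiv\mathbb{R}^2$ is the intended one, and the boundary-and-face step is the genuinely valuable part: \cref{thm:1} with $n=2$ only yields $\abs{\mathrm{supp}(P^*)}\le 3\abs{X}$, and your observation that strict monotonicity of $\Phi$ in its first argument forces $v^*$ onto the boundary of the planar convex body $\mathcal{V}(\mu)$, hence into $\mathrm{ext}_k(\mathcal{V}(\mu))$ with $k\le 1$, is exactly what is needed to invoke \cref{lem:1} directly and obtain the stated $2\abs{X}$ (the proposition's ``$=$'' is evidently a typo for ``$\le$'').

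There is, however, a sign slip in your final step. From \cref{thm:foc} you correctly obtain $P^*\in\arg\max_P E_P\left[F(\nu)-f'(I^*)\left(H(\mu)-H(\nu)\right)\right]$ with $I^*=E_{P^*}[H(\mu)-H(\nu)]$. But $-f'(I^*)\left(H(\mu)-H(\nu)\right)=-f'(I^*)H(\mu)+f'(I^*)H(\nu)$, so after discarding the prior-determined constant $f'(I^*)H(\mu)$ the criterion is $E_P\left[F(\nu)+f'(I^*)H(\nu)\right]$, with a \emph{plus} sign --- not the displayed $E_P\left[F(\nu)-f'(I^*)H(\nu)\right]$. The plus sign is also the one that makes economic sense: for an increasing $f$ one has $f'(I^*)\ge 0$, and since $F$ is convex (a maximum of affine maps) and $-H$ is convex, concavifying $F-f'(I^*)H$ would always prescribe full revelation, whereas $F+f'(I^*)H$ correctly penalizes entropy reduction as a cost term should. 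So either the proposition's display carries a sign typo, or your assertion that the first-order condition ``reduces to the displayed condition'' is false as a matter of algebra; in either case the identification should not be claimed without flagging the discrepancy.
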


\subsection{Dynamic information design}
Consider the following Bellman equation:
\begin{align}
	V(\mu)=\max\bigg\{ F(\mu), \sup_{P\in \Delta^2(X)} &e^{-\rho dt} E_{P}[V(\nu)]-f\left( E_P[H(\mu)-H(\nu)] \right) \bigg\}\label{eqn:dri}\\
	\mathrm{s.t.}\ &
	\begin{dcases}
		E_P[\nu]=\mu\\
		E_P[H(\mu)-H(\nu)]\le C
	\end{dcases}\notag
\end{align}

\begin{prop}
	If $F,H\in C\Delta(X)$, $f\in C\mathbb{R}$. $F(x),f(x),C\ge0$. Then there exists unique $V\in C\Delta(X)$ solving \cref{eqn:dri}.
	\label{prop:dri}
\end{prop}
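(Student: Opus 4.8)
The plan is to recognize \cref{eqn:dri} as a fixed-point equation $V=TV$ for a suitable Bellman operator $T$ on the Banach space $\left( C\Delta(X),\norm{\cdot}_\infty \right)$, and to show $T$ is a contraction so that the Banach fixed point theorem delivers existence and uniqueness. Since $\Delta(X)$ is a compact finite-dimensional simplex, $C\Delta(X)$ with the sup norm is complete, which is the ambient setting I need. Concretely I would define $T:C\Delta(X)\to C\Delta(X)$ by
\begin{align*}
	(TV)(\mu)=\max\left\{ F(\mu),\ \sup_{\substack{P\in\Delta^2(X)\\ E_P[\nu]=\mu,\ E_P[H(\mu)-H(\nu)]\le C}} e^{-\rho dt}E_P[V(\nu)]-f\left( E_P[H(\mu)-H(\nu)] \right) \right\},
\end{align*}
so that a continuous solution of \cref{eqn:dri} is exactly a fixed point of $T$, and it suffices to show $T$ is a well-defined self-map that contracts with modulus $e^{-\rho dt}<1$ (discounting is what makes this strictly less than one).

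First I would verify that $T$ maps $C\Delta(X)$ into itself. Fixing $V\in C\Delta(X)$, I set $V^1=V$ and $V^2=H$; because $H(\mu)$ is constant under $P$ we have $E_P[H(\mu)-H(\nu)]=H(\mu)-E_P[V^2]$, so the inner objective depends on $P$ only through $\bigl(E_P[V^1],E_P[V^2]\bigr)$ via the jointly continuous map $\phi(\mu,v^1,v^2)=e^{-\rho dt}v^1-f(H(\mu)-v^2)$, and the information constraint becomes $v^2\ge H(\mu)-C$, defining a continuous correspondence $D(\mu)$. The degenerate experiment at $\mu$ gives $E_P[H(\mu)-H(\nu)]=0\le C$ (here $C\ge 0$ is used), so $\mathcal{V}(\mu)\cap D(\mu)\neq\emptyset$ for every $\mu$. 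Invoking the continuity of $\mathcal{V}$ from \cref{lem:closegraph} together with the maximum-theorem argument of \cref{thm:maximum}, the inner supremum is continuous in $\mu$; taking the pointwise maximum with the continuous $F$ preserves continuity, so $TV\in C\Delta(X)$.

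Next I would establish the contraction estimate. For any feasible $P$ and any $V_1,V_2\in C\Delta(X)$ the $f$-term is identical (it involves only $H$, not the value function), so
\begin{align*}
	e^{-\rho dt}E_P[V_1(\nu)]-f\left( E_P[H(\mu)-H(\nu)] \right)\le e^{-\rho dt}E_P[V_2(\nu)]-f\left( E_P[H(\mu)-H(\nu)] \right)+e^{-\rho dt}\norm{V_1-V_2}_\infty.
\end{align*}
Taking the supremum over feasible $P$ shows the inner values differ by at most $e^{-\rho dt}\norm{V_1-V_2}_\infty$, and the elementary bound $\abs{\max(a,b_1)-\max(a,b_2)}\le\abs{b_1-b_2}$ lets the common term $F(\mu)$ pass through the outer maximum; exchanging the roles of $V_1,V_2$ then yields $\norm{TV_1-TV_2}_\infty\le e^{-\rho dt}\norm{V_1-V_2}_\infty$. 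The Banach fixed point theorem then produces a unique $V\in C\Delta(X)$ with $TV=V$, i.e.\ the unique continuous solution of \cref{eqn:dri}.

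The hard part will be the self-map step, not the contraction: the contraction inequality is immediate from discounting, whereas continuity of $TV$ hinges on the constrained feasible set $\mathcal{V}(\mu)\cap D(\mu)$ being nonempty and varying continuously in $\mu$, which is precisely what \cref{lem:closegraph} and \cref{thm:maximum} supply. The one subtlety I would check carefully is that the objective $\phi(\mu,v^1,v^2)$ genuinely depends on the parameter $\mu$ (through $f(H(\mu)-v^2)$ and the moving threshold $H(\mu)-C$), so the general Berge maximum theorem with a jointly continuous objective is what is needed rather than the parameter-free version literally stated in \cref{thm:maximum}; the continuity conclusion is unchanged, but this point must be noted for the argument to be rigorous.
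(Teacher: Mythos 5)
Your proposal is correct, and the core strategy --- cast \cref{eqn:dri} as a fixed point of a Bellman operator and apply a contraction argument --- is the same as the paper's. The execution differs in two ways worth noting. First, the paper does not prove the contraction estimate directly: it restricts $T$ to the order interval $\mathcal{Z}=\left\{ V\in C\Delta(X)\,\middle|\,F\le V\le \mathrm{co}(F) \right\}$, verifies $T(\mathcal{Z})\subset\mathcal{Z}$ (using $f\ge 0$, $V\ge F\ge 0$ and linearity of $E_P[\mathrm{co}(F)(\nu)]$ under the mean constraint), and then checks Blackwell's sufficient conditions (monotonicity plus $T(V+\alpha)\le T(V)+e^{-\rho dt}\alpha$). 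Your direct Lipschitz estimate $\norm{TV_1-TV_2}_\infty\le e^{-\rho dt}\norm{V_1-V_2}_\infty$ on all of $C\Delta(X)$ is more elementary, bypasses the need for the envelope bound $V\le\mathrm{co}(F)$ and the hypotheses $F\ge 0$, $f\ge 0$ entirely, and in fact delivers uniqueness in the full space $C\Delta(X)$ as the proposition states, whereas the paper's argument literally only yields uniqueness within $\mathcal{Z}$. Second, both proofs lean on \cref{thm:maximum} for continuity of $TV$, and you correctly flag the one genuine subtlety that the paper glosses over: the inner objective $e^{-\rho dt}v^1-f(H(\mu)-v^2)$ and the constraint $v^2\ge H(\mu)-C$ both depend on the parameter $\mu$ directly, so one needs the general Berge maximum theorem with a jointly continuous objective and a continuous constraint correspondence rather than the parameter-free statement of \cref{thm:maximum}; identifying and addressing this makes your write-up, if anything, more careful than the original.
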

\begin{proof}
	Let $\mathcal{Z}=\left\{ V\in C\Delta(X)|F\le V\le \mathrm{co}(F) \right\}$. We define operator:
	\begin{align}
		T(V)(\mu)=\max\bigg\{ F(\mu), \sup_{P\in \Delta^2(X)} &e^{-\rho dt} E_{P}[V(\nu)]-f\left( E_P[H(\mu)-H(\nu)] \right) \bigg\}\label{eqn:contract}\\
	\mathrm{s.t.}\ &
	\begin{dcases}
		E_P[\nu]=\mu\\
		E_P[H(\mu)-H(\nu)]\le C
	\end{dcases}\notag
	\end{align}
	By \cref{thm:1}, the max operator is well defined. When $P=\delta_{\mu}$, $E_P[\nu]=\mu$ and $E_P[H(\mu)-H(\nu)]=0$ so the sup operator is also well defined. Now we prove that $T$ is a contraction mapping on $(\mathcal{Z},L_{\infty})$.
	\begin{itemize}
		\item $T(\mathcal{Z})\subset \mathcal{Z}$: First of all, given the outter max operator in \cref{eqn:contract}, $T(V)(\mu)\ge F(\mu)$. Then $\forall P\in \Delta^2(X)$ such that $E_P[\nu]=\mu$ and $E_P[H(\mu)-H(\nu)]\le C$:
			\begin{align*}
				&e^{-\rho dt} E_{P}[V(\nu)]-f\left( E_P[H(\mu)-H(\nu)] \right)\\
				\le&e^{-\rho dt} E_P[V(\nu)]\\
				\le& E_{P}[\mathrm{co}(F)(\nu)]\\
				=&\mathrm{co}(F)(\mu)
			\end{align*}
			First inequality is from $f$ being non-negative, second inqeuality is from $V$ being non-negative, $e^{-\rho dt}<1$ and $V\le \mathrm{co}(F)$. Last equality is from $\mathrm{co}(F)$ being linear. Last step is to show $T(\mathcal{Z})(\mu)\in C\Delta(X)$. This is directly implied by \cref{thm:maximum}.
		\item $T(V)$ is monotonic: Suppose $U(\mu)\ge 0$ and $U+V\in \mathcal{Z}$ If $T(V)(\mu)=F(\mu)$, then by construction $T(V+U)\ge F(\mu)=T(V)(\mu)$. If $T(V)(\mu)>F(\mu)$, let $P$ be solution to \cref{eqn:contract} at $\mu$ for $V$:
			\begin{align*}
				T(V+U)(\mu)\ge& e^{-\rho dt} E_P[V(\nu)+U(\nu)]-f\left( E_P[H(\mu)-H(\nu)] \right)\\
				=&T(V)(\mu)+e^{-\rho dt}E_P[U(\nu)]\\
				\ge&T(V)(\mu)
			\end{align*}
			And constraints $E_P[H(\mu)-H(\nu)]\le C$ and $E_P[\nu]=\mu$ are independent of choice of $V$ so still satisfied.
		\item $T(V)$ is contraction. We claim that $T(V+\alpha)(\mu)\le T(V)(\mu)+e^{-\rho dt}\alpha$. Suppose not true at $\mu$. Obviously $T(V+\alpha)(\mu)>F(\mu)$. Then let $P$ be the solution of \cref{eqn:contract} at $\mu$ for $V+\alpha$.
			\begin{align*}
				T(V)(\mu)\ge& e^{-\rho dt} E_P[V(\nu)]-f\left( E_P[H(\mu)-H(\nu)] \right)\\
				=&e^{-\rho dt} E_P[V(\nu)+\alpha]-f\left( E_P[H(\mu)-H(\nu)] \right)-e^{-\rho dt} \alpha\\
				=&T(V+\alpha)(\mu)-e^{-\rho dt}\alpha\\
				>& T(V)(\mu)
			\end{align*}
			Similar to last part,  constraints $E_P[H(\mu)-H(\nu)]\le C$ and $E_P[\nu]=\mu$ are still satisfied. Contradiction.
	\end{itemize}
	Therefore, by Blackwell condition, $T(V)$ is a contraction mapping on $\mathcal{Z}$. There exists a unique solution $V\in\mathcal{Z}$ solving the fixed point problem $T(V)=V$. 
\end{proof}

\subsection{Persuade voters with outside options}
Consider a politician who can strategically design a public signal to voters to influence their voting behavior (the setup in \cite{10.1257/aer.20140737}).\par
\emph{Voting game}: There are $n\ge 1$ voters who chooses from a binary policy set $A=\left\{ a_0,a_1 \right\}$. There are two states $X= \left\{ x_0,x_1 \right\}$. Each voter gets Bernoulli utility $u_i(a,x)$ from voting for the policy $a$. Assume that $a_1$ is unanimously preferred to $a_0$ when $x_1$ is the true state and vice versa. The politician has state independent utility over policies and prefers $a_1$ strictly to $a_0$. I assume that $a_0$ is a default policy. For $a_1$ to be proved, the politician needs more than $m$ $(m\le n)$ voters to voter for $a_1$. The politician can design a signal structure to influence voters' decisions. Equivalently, I assume that the politician chooses a distribution over posterior beliefs $P\in \Delta^2(X)$.\par
\emph{Outside option}: Different from \cite{10.1257/aer.20140737}, where number of potential voters is fixed, I assume that each voter has opportunity cost $c_i$ of participating in the voting game. Therefore, to approve the new policy, the politician should first attract at least $m$ voters to the game and then persuade them to vote for $a_1$.\par
To simplify notation, I write all voter's utility as functions of belief $F_i(\mu)$. Let $\overbar{\mu}_i$ be the threshold belief for each voter to vote for $a_1$ The politician's optimization problem can be written as:
\begin{align}
	\label{eqn:vote}
	\sup_{i_1,\dots, i_k, P}&E_{P}\left[ \mathbf{1}_{\#(\mu\ge \overbar{\mu}_{i_j})\ge m} \right]\\
	\mathrm{s.t.}\ &
	\begin{dcases}
		E_{P}[F_{i_j}]\ge c_{i_j}\\
		E_{P}[\nu]=\mu
	\end{dcases}\notag
\end{align}
Notice that in \cref{eqn:vote}, the politician doesn't necessarily need to exclude voters outside of $\left\{ i_1,\dots,i_k \right\}$, so the maximum from \cref{eqn:vote} must be weakly larger than the politician's optimal utility. On the other hand, for any strategy in \cref{eqn:vote}, potentially including more voters to the voting game can only make the politician better off. So \cref{eqn:vote} exactly characterizes the politician's optimization problem.\par
For any voter, except for $\overbar{\mu}_i$, there is another critical belief $\widetilde{\mu}_i$:
\begin{align*}
	\frac{\widetilde{\mu}_i-\mu}{\widetilde{\mu}_i} F_i(0)+\frac{\mu}{\widetilde{\mu}_i}F_i(\widetilde{\mu}_i)=c_i
\end{align*}
Suppose voter observes information structure inducing posterior belief $0$ and $\widetilde{\mu}_i$, then the voter is exactly indifferent between paying the opportunity cost and entering the voting game and not.
\begin{prop}
	\label{prop:vote}
	Let $\mu^*$ be the smallest belief s.t. $\#\left\{ i|\overbar{\mu}_i\ge \mu^* \right\}\ge m$ and $\#\left\{ i|\widetilde{\mu}_i\ge\mu^* \right\}\ge m$, then the optimal strategy for \cref{eqn:vote} is:
	\begin{align*}
		\begin{cases}
			P(0)=\frac{\mu^*-\mu}{\mu^*}\\
			P(\mu^*)=\frac{\mu}{\mu^*}
		\end{cases}
	\end{align*}
	and $\left\{ i_1,\dots,i_k \right\}=\left\{ i|\min\left\{ \widetilde{\mu}_i,\overbar{\mu}_i \right\}\ge\mu^* \right\}$.
\end{prop}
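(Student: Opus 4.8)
The plan is to solve \cref{eqn:vote} by reducing it to a one-dimensional concavification problem and then verifying the proposed solution by guess-and-verify, exploiting that $\abs{X}=2$: a posterior belief is a scalar $\mu'\in[0,1]$ and any Bayes-plausible $P$ is just a mean-$\mu$ distribution on $[0,1]$. The outer maximization over the participant set $S=\{i_1,\dots,i_k\}$ is discrete, so I would first fix $S$, solve the induced persuasion problem in $P$, and only then optimize over $S$.

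For fixed $S$ the objective $E_P[\mathbf 1_{\#(\cdot)\ge m}]$ is linear in $P$, namely $E_P[V^0]$ for the success indicator $V^0(\mu')=\mathbf 1_{\{\#\{i\in S:\,i\text{ votes }a_1\text{ at }\mu'\}\ge m\}}$, and each participation constraint $E_P[F_i]\ge c_i$ is likewise linear in $P$. Since every $F_i(\mu')=\max_a E_{\mu'}[u_i(a,x)]$ is a maximum of affine functions, it is convex and piecewise linear with a single kink at $\overbar{\mu}_i$, so $V^0$ is a monotone step function of the scalar belief, jumping from $0$ to $1$ at the relevant $m$-th order statistic of the thresholds. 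This is exactly the structure for which the one-dimensional concavification argument (the $\abs{X}=2$ case of \cref{lem:1} combined with the Lagrangian of \cref{thm:lag}) yields an optimum supported on two beliefs, one of which may be taken to be the extreme point $0$ and the other the jump belief $\mu^*$. I would then record that for such a step objective the value at prior $\mu$ is $\mu/\mu^*$, the largest mass Bayes plausibility permits at $\mu^*$ when the complementary mass is placed at $0$.

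Next I would convert the two feasibility requirements into threshold comparisons in $\mu^*$. Success at $\mu^*$ is a pure counting condition on the $\overbar{\mu}_i$. For participation I would substitute the two-point structure into $E_P[F_i]$, obtaining $\phi_i(\mu^*)=F_i(0)+\tfrac{\mu}{\mu^*}\big(F_i(\mu^*)-F_i(0)\big)$; because $F_i$ is convex, the chord slope $\big(F_i(\mu^*)-F_i(0)\big)/\mu^*$ is monotone in $\mu^*$, so $\phi_i$ is monotone and meets $c_i$ exactly once, at the critical belief $\widetilde{\mu}_i$ defined in the text. Thus each voter's participation collapses to a one-sided comparison of $\mu^*$ with $\widetilde{\mu}_i$, and an includable voter must clear both its voting threshold $\overbar{\mu}_i$ and its participation threshold $\widetilde{\mu}_i$. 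Since the value $\mu/\mu^*$ is monotone in $\mu^*$, optimizing it drives $\mu^*$ to the extreme belief at which at least $m$ voters simultaneously clear both thresholds, which is precisely the $\mu^*$ and the selection $\{i:\min\{\widetilde{\mu}_i,\overbar{\mu}_i\}\ge\mu^*\}$ in the statement.

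The step I expect to be the main obstacle is the upper bound: ruling out that some richer signal (more than two posteriors, or an interior low belief in place of $0$) together with a different $S$ beats $\mu/\mu^*$. I would handle this by concavifying the Lagrangian $V^0+\sum_{i\in S}\lambda_i(F_i-c_i)$ over $[0,1]$ for the multipliers supplied by \cref{thm:lag}, and showing its upper concave envelope at $\mu$ is attained by the two-point structure above, so no mean-$\mu$ distribution can exceed the proposed value; the monotonicity of $\phi_i$ and of $\mu/\mu^*$ in $\mu^*$ is what makes pushing the low posterior to $0$ and concentrating the high posterior at the jump belief simultaneously value-maximizing and slack-maximizing for participation. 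A secondary subtlety is the discontinuity of $V^0$, which blocks a direct appeal to \cref{thm:1}; I would circumvent it by working with the upper concave envelope of the step function directly, where existence of an optimizer is standard.
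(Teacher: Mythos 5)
Your proposal follows essentially the same route as the paper's own proof: fold the participation constraints into the objective via the Lagrange characterization of \cref{cor:convex}, observe that the resulting one-dimensional function of the posterior is convex below the success threshold and affine above it, concavify to obtain a two-point support $\left\{ 0,\mu^* \right\}$, and then pin down $\mu^*$ by the counting conditions on $\overbar{\mu}_i$ and $\widetilde{\mu}_i$. Your extra care with the monotone chord slope of the convex $F_i$ (which delivers $\widetilde{\mu}_i$ as a one-sided threshold) and with the discontinuity of the indicator objective fills in details the paper's terse proof leaves implicit, but the argument is the same.
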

\cref{prop:vote} states that when voters must pay opportunity cost to enter the voting game, then there are potentially two pivotal voters. One is the one who's most difficult to persuade to adopt $a_1$, and the other is the one who's most difficult to attract to the voting game. Both \emph{difficulty} levels are measured by the location of the critical beliefs.

\begin{proof}
	The key step of proving \cref{prop:vote} is to apply \cref{cor:convex} to \cref{eqn:vote}. Notice that the objective function is \cref{eqn:vote} is in fact an indicator function with some threshold belief level (say $\mu'$, which is the lowest belief to persuade at least $m$ voters to vote for $a_1$). So \cref{cor:convex} is directly applicable to \cref{eqn:vote}, and the objective function is in the form of:
	\begin{align}
		\sum \lambda_{i_j} \max\left\{ 0,\mu-\overbar{\mu}_{i_j} \right\}+\mathbf{1}_{\mu\ge\mu'}\label{eqn:vote:lag}
	\end{align}
	It is easy to see that \cref{eqn:vote:lag} is a convex function on $\mu\in[0,\mu']$ and a linear function on $\mu\in[\mu',1]$ (there is no point to include voters who will never vote for $a_1$.). So optimal persuasion strategy must induce either belief $0$ or interior belief $\nu>\mu'$. Of course since at least $m$ voters are included and persuaded, $\nu\ge\mu^*$. On the other hand, it is easy to verify that the strategy define by $\mu^*$ induces at least $m$ voters to participate, so $\mu^*$ is optimal.
\end{proof}

\subsection{Screening with information}
Consider a problem of Bayesian persuasion with unknown receiver types. Let $\Theta$ be the set of receiver types, $X$ be the finite set of states and $A$ be the set of actions. $\forall \theta\in \Theta$, decision utility at each belief is $F_{\theta}(\mu)=\max_{a}E_{\mu}[u(a,x,\theta)]$. Sender's utility at each belief given receiver type $\theta$ is $V_{\theta}(\mu)$. Assume that the type distribution is $\pi(\theta)\in \Delta(\Theta)$. The sender can screen the receivers by providing a menu of information structures. Then by revelation principle, sender's optimization problem is:
\begin{align}
	\sup_{P_{\theta}\in \Theta\times \Delta^2(X)}&\int E_{P_{\theta}}[V_{\theta}]\pi(\theta)\mathrm{d} \theta\label{eqn:screen}\\
	\mathrm{s.t.}\ &
	\begin{dcases}
	E_{P_{\theta}}[F_{\theta}]\ge E_{P_{\theta'}}[F_{\theta}]\ \forall \theta,\theta'\in \Theta \\
	E_{P_{\theta}}[\nu]=\mu\ \forall \theta\in \Theta
\end{dcases}\notag
\end{align}
When $\Theta$ and $A$ are both infinite, solving \cref{eqn:screen} is difficult due to the dimensionality of strategy space. When $A$ is finite, it is WLOG to restrict the sender to use direct message which suggests the actions being played conditional on the state. Then \cref{eqn:screen} reduces to a screening problem with finite dimensional strategy function (plus a few more obedience constraints). In the remaining case where $\Theta$ is finite but $A$ is infinite, it is still unclear whether it is WLOG to consider only finite dimensional screening mechanisms.\par
Now consider the finite $\Theta$ case. Suppose $\Theta=\left\{ 1,\dots, N \right\}$. Define:
\begin{align*}
	\begin{dcases}
	\mathcal{V}_i(\mu)=\left\{ E_P[V_i],E_P[F_1],\dots,E_P[F_N]\bigg|P\in \Delta^2(X),E_P[\nu]=\mu \right\}\\
	D(\mu)=\left\{ v\in \mathbb{R}^{(N\times (N+1))}\Big| v_{i}^{i+1}\ge v_{j}^{i+1}\forall i,j \right\}
\end{dcases}
\end{align*}
Then \cref{eqn:screen} is equivalent to the following problem:
\begin{align}
	\sup_{v\in D(\mu)\bigcap \times_{i=1}^N\mathcal{V}_i(\mu)} \pi_i v_i^1
	\label{eqn:screen:1}
\end{align}
By \cref{lem:1}, each $\mathcal{V}_i(\mu)$ is compact set. Therefore, $D(\mu)\bigcap \times \mathcal{V}_i(\mu)$ is compact. It is easy to see that $D(\mu)\bigcap \times \mathcal{V}_i(\mu)$ is non-empty. By Wierestrass's theorem, there exists $v^*$ solving \cref{eqn:screen:1}. Then by \cref{lem:1}, there exists $P^*_i\in \Delta^2(X)$ s.t. $v^{*}_i=\left( E_{P^*_i}[V_i],E_{P^*_i}[F_1],\dots,E_{P^*_i}[F_N] \right)$ and $\abs{\mathrm{supp}(P^*_i)}\le (N+2)\cdot \abs{X}$. Therefore, $\left( P^*_1,\dots,P^*_N \right)$ solves \cref{eqn:screen} and we get the following proposition:
\begin{prop}
	\label{prop:screen}
	If $\Theta$ is finite, then $\forall \mu\in \Delta(X)$, there exists $\left( P^*_1,\dots, P^*_N \right)\in \Delta^2(X)^N$ solving \cref{eqn:screen} and each $\abs{\mathrm{supp}(P^*_i)}\le (N+2)\cdot \abs{X}$.
\end{prop}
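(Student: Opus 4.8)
The plan is to recast the screening problem \cref{eqn:screen} as the finite-dimensional program \cref{eqn:screen:1} and then apply \cref{lem:1} twice: once for existence of a maximizer and once to pull that maximizer back to information structures of bounded support. The crucial observation is that, for finite $\Theta=\{1,\dots,N\}$, both the objective $\int E_{P_\theta}[V_\theta]\pi(\theta)\,\mathrm{d}\theta$ and each incentive constraint $E_{P_\theta}[F_\theta]\ge E_{P_{\theta'}}[F_\theta]$ depend on the menu $(P_1,\dots,P_N)$ only through the finitely many numbers $E_{P_i}[V_i]$ and $E_{P_i}[F_j]$. This is exactly why one records, for each type $i$, the $(N+1)$-dimensional value vector ranging over $\mathcal{V}_i(\mu)$ and encodes incentive compatibility as the polyhedron $D(\mu)$. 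The first step is to check that a menu is feasible for \cref{eqn:screen} if and only if its induced value vector $v=(v_1,\dots,v_N)$ lies in $D(\mu)\cap\times_{i=1}^N\mathcal{V}_i(\mu)$ and that the two objectives coincide; here it matters that each block carries \emph{all} $N$ decision utilities, not just its own, so that the cross-type comparison $v_i^{i+1}\ge v_j^{i+1}$ is expressible, and that the $N$ experiments may be chosen independently so that the feasible value vectors genuinely form a product intersected with $D(\mu)$.

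The second step is to show this feasible set is compact and non-empty, so that the continuous (indeed linear) objective $v\mapsto\sum_i\pi_i v_i^1$ attains its maximum by Weierstrass's theorem. Compactness of each $\mathcal{V}_i(\mu)$ is immediate from \cref{lem:1} applied to the $N+1$ continuous functions $V_i,F_1,\dots,F_N$; hence the product $\times_{i=1}^N\mathcal{V}_i(\mu)$ is compact, and intersecting with the closed set $D(\mu)$ keeps it compact. Non-emptiness is witnessed by the uninformative experiment $P_i=\delta_\mu$ for every $i$: all blocks then coincide, so every incentive constraint holds with equality. A maximizer $v^*$ therefore exists.

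The third step recovers the mechanism. Each block $v_i^*$ lies in $\mathcal{V}_i(\mu)\subset\mathbb{R}^{N+1}$; since every point of a convex body in $\mathbb{R}^{N+1}$ is a $k$-extreme point for some $k\le N+1$, the support bound in \cref{lem:1} produces $P_i^*\in\Delta^2(X)$ implementing $v_i^*$ with $\abs{\mathrm{supp}(P_i^*)}\le(N+2)\abs{X}$. By the equivalence from the first step, $(P_1^*,\dots,P_N^*)$ solves \cref{eqn:screen}, which is the claim.

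The main obstacle is conceptual and lives entirely in the first step: one must be sure the direct-mechanism screening problem is captured \emph{exactly}, and with no loss, by the product possibility set and the linear set $D(\mu)$. The payoff of getting this encoding right is that the finite-support conclusion follows even when the action space $A$ is infinite, since the argument never touches $A$ directly. The remaining steps are routine once \cref{lem:1} is available; the only point requiring attention is that each $\mathcal{V}_i$ sits in $\mathbb{R}^{N+1}$, which is what turns the support bound into $(N+2)\abs{X}$ rather than the $2\abs{X}$ one sees in the single-value concavification problem.
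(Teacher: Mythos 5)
Your proposal is correct and follows essentially the same route as the paper: reformulate \cref{eqn:screen} as \cref{eqn:screen:1} over $D(\mu)\cap\times_{i=1}^N\mathcal{V}_i(\mu)$, get compactness of each block from \cref{lem:1} applied to the $N+1$ functions $V_i,F_1,\dots,F_N$, invoke Weierstrass, and pull each block $v_i^*$ back to a $P_i^*$ with $\abs{\mathrm{supp}(P_i^*)}\le (N+2)\abs{X}$ via the $k$-extreme-point support bound. Your added remarks on non-emptiness via the uninformative experiment and on why the feasible set is exactly a product intersected with $D(\mu)$ are details the paper leaves implicit, but the argument is the same.
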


\cref{prop:screen} states that it is WLOG to consider only mechanisms with finite support when solving \cref{eqn:screen}. Therefore, it is sufficient to maximize over $N(N+2)\cdot \abs{X}$ posterior beliefs and $N(N+2)\cdot \abs{X}$ corresponding probabilities to solve constrained optimization problem \cref{eqn:screen}, which is a computationally tractable problem.

\section{Conclusion}

In this paper, I study the set of all possible combinations of expected valuations that can be implemented by designing information. I show that the set can be implemented only using information structures with finite realizations, and all extreme points of the set can be characterized using a concavification characterization. I developed a Lagrange method in the information design setup, and applied the results to various applications including static and dynamic information acquisition, persuasion of receivers with outside options and screening using information.

\newpage
\small
\bibliographystyle{apalike}
\bibliography{report}

\appendix
\section{Theorems used in proof}
Here are key theorems used for my proof. \cref{thm:stra} is Straszewicz's theorem (\cite{straszewicz1935exponierte}, see Theorem 18.6 of \cite{rockafellar1970convex}). \cref{thm:krein} is Krein-Milman theorem(see Theorem 3.23 of \cite{rudin1991functional}). \cref{thm:cara} is Carath{\'e}odory's theorem (\cite{caratheodory1907variabilitatsbereich}).\cref{thm:prok} is Prokhorov's theorem (see Theorem 5.1 of \cite{billingsley2013convergence})
\begin{thm}
	Let $C\in\mathbb{R}^n$ be a closed convex set, $\mathrm{cl}\left( \mathrm{exp}(C) \right)=\mathrm{ext}(C)$. 
	\label{thm:stra}
\end{thm}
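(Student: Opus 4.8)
The plan is to prove the nontrivial inclusion $\mathrm{ext}(C)\subseteq \mathrm{cl}(\mathrm{exp}(C))$; the reverse is free, since every exposed point is extreme, giving $\mathrm{cl}(\mathrm{exp}(C))\subseteq\mathrm{cl}(\mathrm{ext}(C))$, and the two together pin down the asserted equality of closures. Because the only use of this theorem in the paper is for $C=\mathcal{V}(\mu)$, which is compact by \cref{lem:1}, I would first reduce to the case $C$ compact and treat the general unbounded closed convex case by the classical recession-cone argument, which is the single place unboundedness enters.

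The engine is the notion of a \emph{farthest point}: a point $x\in C$ maximizing $z\mapsto\norm{z-y}$ over $C$ for some center $y\in\mathbb{R}^n$. The first key step is that every farthest point is exposed. Fixing $y$, letting $r=\max_{z\in C}\norm{z-y}$ and letting $x$ attain it, I would expand the square of $\norm{z-y}$ to obtain the identity
\[
\langle z-x,\,x-y\rangle=\tfrac12\big(\norm{z-y}^2-r^2\big)-\tfrac12\norm{z-x}^2 .
\]
Both terms on the right are $\le 0$ (the first because $r$ is the maximal distance, the second trivially), and they vanish at once only when $z=x$. Hence the linear functional $\ell(z)=\langle z,\,x-y\rangle$ is maximized over $C$ \emph{uniquely} at $x$, so the hyperplane $\{\ell=\ell(x)\}$ supports $C$ and meets it only at $x$, exposing $x$. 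Notably this needs no uniqueness of the farthest point: each farthest point is individually exposed.

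The second key step is that farthest points accumulate at every extreme point. Fix $x_0\in\mathrm{ext}(C)$ and $\epsilon>0$, and set $K=\mathrm{conv}\big(C\setminus B(x_0,\epsilon)\big)$, which is compact (convex hull of a compact set). Extremality forces $x_0\notin K$: otherwise $x_0$ is a convex combination of points of $C$ all at distance $\ge\epsilon$ from it, contradicting extremality. Let $y_0$ be the nearest point of $K$ to $x_0$, put $u=(x_0-y_0)/\delta$ with $\delta=\norm{x_0-y_0}>0$, and take $y=x_0-su$ on the far side of $K$. The nearest-point inequality gives $\langle z-x_0,u\rangle\le-\delta$ for all $z\in K$, so expanding $\norm{z-y}^2$ yields $\norm{z-y}^2-s^2\le M^2-2s\delta$ with $M=\mathrm{diam}(C)$; choosing $s>M^2/(2\delta)$ makes every point of $K$ strictly closer to $y$ than $x_0$ is. Every farthest point of $C$ from $y$ then avoids $K$, hence lies in $C\cap B(x_0,\epsilon)$. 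By compactness such a farthest point exists, and by the first step it is exposed, so there is an exposed point within $\epsilon$ of $x_0$. Letting $\epsilon\downarrow 0$ gives $x_0\in\mathrm{cl}(\mathrm{exp}(C))$.

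I expect the main obstacle to lie not in the compact core above, which is clean, but in two supporting points: first, verifying $x_0\notin K$ cleanly from extremality, which I would justify via \cref{thm:cara} since $K$ is the convex hull of a compact set and every representation of $x_0$ is a finite convex combination; and second, extending to unbounded closed convex $C$, where farthest points may fail to exist along recession directions, requiring a separate argument that such directions contribute no new extreme points. Everything else reduces to the routine estimates displayed above.
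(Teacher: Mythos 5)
Your argument is correct, but note that the paper does not actually prove this statement: \cref{thm:stra} is Straszewicz's theorem, stated in the appendix purely as a citation (to \cite{straszewicz1935exponierte} and Theorem 18.6 of \cite{rockafellar1970convex}), so there is no in-paper proof to compare against. What you have written is essentially the classical farthest-point proof from Rockafellar, and the compact core is sound: the identity $\langle z-x,\,x-y\rangle=\tfrac12(\norm{z-y}^2-r^2)-\tfrac12\norm{z-x}^2$ does show every farthest point is exposed, and the construction of $y=x_0-su$ with $s>M^2/(2\delta)$ correctly forces the farthest point from $y$ into $B(x_0,\epsilon)$; the step $x_0\notin K$ follows from extremality exactly as you say (any representation of $x_0$ as a finite convex combination of points of $C\setminus B(x_0,\epsilon)$ contradicts extremality, and \cref{thm:cara} reduces to finite combinations). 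Two caveats. First, you prove only the compact case and defer the unbounded closed case to a recession-cone argument you do not supply; since the paper applies \cref{thm:stra} only to $\mathcal{V}(\mu)$, which is compact by \cref{lem:1}, this omission is harmless for the paper's purposes but leaves the theorem as literally stated unproved. Second, the equality $\mathrm{cl}(\mathrm{exp}(C))=\mathrm{ext}(C)$ as written is false in general, since $\mathrm{ext}(C)$ need not be closed for $n\ge 3$; the correct content is $\mathrm{exp}(C)\subseteq\mathrm{ext}(C)\subseteq\mathrm{cl}(\mathrm{exp}(C))$, which is what you prove, and which is all the paper ever uses (it only needs to approximate extreme points by exposed ones in the proof of \cref{lem:1}). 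So your reading of the statement is the right one, but it would be worth saying explicitly that you are proving the inclusion form rather than the literal equality.
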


\begin{thm}
	Let $C\in\mathbb{R}^n$ be a compact and convex set, $C=\mathrm{conv}(\mathrm{ext}(C))$. 
	\label{thm:krein}
\end{thm}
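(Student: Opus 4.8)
The plan is to prove the statement by induction on the dimension $d$ of the affine hull $\mathrm{aff}(C)$. Since only the affine span carries information, I would first pass to $\mathrm{aff}(C)$ and identify it with $\mathbb{R}^d$, so that without loss of generality $C$ is a compact convex set with nonempty interior in $\mathbb{R}^d$. The base case $d=0$ is immediate: $C$ is a singleton, which is its own unique extreme point, so $C=\mathrm{conv}(\mathrm{ext}(C))$. Note that the inclusion $\mathrm{conv}(\mathrm{ext}(C))\subseteq C$ holds trivially in every case, since $C$ is convex and contains all its extreme points; the entire content of the theorem is the reverse inclusion.

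For the inductive step, assume the result for all compact convex sets of dimension strictly less than $d$, and fix an arbitrary $x\in C$. I would split into two cases according to whether $x$ lies on the relative boundary $\partial C$ or in the relative interior $\mathrm{int}(C)$. If $x\in\partial C$, then convexity of $C$ supplies a supporting hyperplane $H$ through $x$, and the exposed face $F=C\cap H$ is a nonempty compact convex set with $\dim F\le d-1$ (because $F\subseteq H$ and $H$ cannot span $\mathbb{R}^d$). By the inductive hypothesis $x\in F=\mathrm{conv}(\mathrm{ext}(F))$. The decisive point is a face lemma: every extreme point of a face of $C$ is already extreme in $C$. Indeed, if $v\in\mathrm{ext}(F)$ could be written nontrivially as a convex combination of two points of $C$, then both points would lie in $F$ since $F$ is a face, contradicting extremality of $v$ within $F$. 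Hence $\mathrm{ext}(F)\subseteq\mathrm{ext}(C)$, and therefore $x\in\mathrm{conv}(\mathrm{ext}(C))$.

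If instead $x\in\mathrm{int}(C)$, I would pick any line $\ell$ through $x$; compactness and convexity force $\ell\cap C$ to be a closed segment $[a,b]$ with both endpoints on $\partial C$, and $x=\lambda a+(1-\lambda)b$ for some $\lambda\in[0,1]$. Applying the boundary case just established to $a$ and to $b$ expresses each endpoint as a convex combination of extreme points of $C$, and hence so is $x$. This closes the induction and yields $C\subseteq\mathrm{conv}(\mathrm{ext}(C))$, completing the proof.

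The main obstacle is the boundary case, which rests on two supporting facts that must be established with care: the existence of a supporting hyperplane at every boundary point of a full-dimensional compact convex set, obtained by a separation argument isolating $x$ from $\mathrm{int}(C)$; and the face lemma that extreme points of a face are extreme in the ambient set. The reduction to $\mathrm{aff}(C)$ at the outset is precisely what makes ``nonempty interior'' available, so that both the supporting-hyperplane construction and the line argument producing two distinct boundary points are legitimate. The remaining verifications—the dimension drop $\dim F\le d-1$ and the trivial inclusion direction—are routine once this scaffolding is in place.
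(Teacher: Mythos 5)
Your proof is correct. The paper itself does not prove this statement: it is listed in the appendix as a cited classical result (the finite-dimensional Krein--Milman theorem, i.e.\ Minkowski's theorem, with a pointer to Theorem 3.23 of Rudin), so there is no in-paper argument to compare against. Your induction on the dimension of $\mathrm{aff}(C)$ is the standard textbook proof and all the key steps are in order: the reduction to the full-dimensional case, the supporting-hyperplane construction at a relative boundary point, the dimension drop for the exposed face $F=C\cap H$, and the line-through-an-interior-point argument. The one ingredient you isolate as a ``face lemma''---that $\mathrm{ext}(F)\subseteq\mathrm{ext}(C)$ for a face $F$ of $C$---is in fact proved separately in the paper (its \cref{lem:face}), and your justification of it via the supporting functional is the same argument, so that piece is consistent with what the paper does elsewhere.
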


\begin{thm}
	Let $C\in\mathbb{R}^n$, if $x\in \mathrm{conv}(C)$ then $x\in \mathrm{conv}(R)$ for $R\subset C$, $\abs{R}\le n+1$. 
	\label{thm:cara}
\end{thm}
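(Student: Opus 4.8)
The plan is to establish Carath\'eodory's theorem by a \emph{minimal-representation} argument coupled with a linear-algebra dimension count. Since $x\in\mathrm{conv}(C)$, by definition $x$ is a \emph{finite} convex combination of points of $C$; so I may write $x=\sum_{j=1}^{m}\lambda_j x_j$ with each $x_j\in C$, each $\lambda_j>0$, and $\sum_{j=1}^{m}\lambda_j=1$, and among all such representations I select one for which the number of summands $m$ is as small as possible. The target is to show $m\le n+1$, after which $R=\{x_1,\dots,x_m\}\subset C$ witnesses the conclusion.

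First I would argue by contradiction, assuming $m>n+1$, i.e. $m-1>n$. The $m-1$ difference vectors $x_2-x_1,\dots,x_m-x_1$ then lie in $\mathbb{R}^n$ but outnumber the dimension, so they are linearly dependent: there exist scalars $\mu_2,\dots,\mu_m$, not all zero, with $\sum_{j=2}^{m}\mu_j(x_j-x_1)=0$. Putting $\mu_1=-\sum_{j=2}^{m}\mu_j$ yields a nonzero vector $(\mu_1,\dots,\mu_m)$ satisfying
\begin{align*}
	\sum_{j=1}^{m}\mu_j x_j=0\qquad\text{and}\qquad\sum_{j=1}^{m}\mu_j=0.
\end{align*}

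Next I would slide the original weights along this null direction. For every $t$ the identity $x=\sum_{j=1}^{m}(\lambda_j-t\mu_j)x_j$ holds, and the perturbed weights still sum to $1$ because $\sum_j\mu_j=0$. As the $\mu_j$ are not all zero yet sum to zero, some $\mu_j$ is strictly positive, so $t^\ast=\min_{j:\,\mu_j>0}\lambda_j/\mu_j>0$ is attained at some index $k$. At $t=t^\ast$ each coefficient $\lambda_j-t^\ast\mu_j$ is nonnegative---automatically when $\mu_j\le 0$, and by the very definition of $t^\ast$ when $\mu_j>0$---the $k$-th coefficient vanishes, and the coefficients still sum to $1$. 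Hence $x$ is a convex combination of the $m-1$ points $\{x_j:j\ne k\}\subset C$, contradicting the minimality of $m$; therefore $m\le n+1$. \textbf{The main obstacle} is precisely the calibration of the step size $t^\ast$: it must be chosen so as to preserve nonnegativity of \emph{all} the weights while forcing \emph{exactly one} of them to zero, and verifying this simultaneously is the only non-routine point---the remaining steps are bookkeeping.
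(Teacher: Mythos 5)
Your proof is correct, and it is the classical argument for Carath\'eodory's theorem: take a representation of $x$ as a convex combination with the minimal number $m$ of terms, use the affine dependence of any $m>n+1$ points in $\mathbb{R}^n$ to produce coefficients $\mu_j$ with $\sum_j\mu_j x_j=0$ and $\sum_j\mu_j=0$, and slide along this null direction by the critical step $t^\ast=\min_{j:\mu_j>0}\lambda_j/\mu_j$ to kill one weight while keeping the others nonnegative. All the steps check out: the existence of a strictly positive $\mu_j$ follows from the coefficients being nonzero and summing to zero, and the nonnegativity of the perturbed weights at $t^\ast$ is verified exactly as you state. Note, however, that the paper does not prove this statement at all --- it records it in the appendix as Carath\'eodory's theorem and cites the original 1907 paper --- so there is no in-paper argument to compare against; your write-up supplies a complete, self-contained proof where the paper relies on a reference. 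The only (standard and harmless) point worth being aware of is that your opening step implicitly uses the characterization of $\mathrm{conv}(C)$ as the set of \emph{finite} convex combinations of points of $C$; if one instead defines the convex hull as the intersection of all convex supersets, that equivalence is a short preliminary lemma.
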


\begin{thm}
	A tight set $\Pi$ of probability measures on Borel sets of metric topological space $\mathcal{X}$ is relative compace in weak-$^*$ topology.
	\label{thm:prok}
\end{thm}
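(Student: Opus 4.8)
The plan is to establish the (here relevant) direction of Prokhorov's theorem: tightness implies relative compactness in the topology of weak convergence, where $P_j \Rightarrow P$ means $\int g\,dP_j \to \int g\,dP$ for every bounded continuous $g$. Since a tight family is effectively concentrated on a separable $\sigma$-compact subspace, the weak topology on the measures in question is metrizable (via the L\'evy--Prokhorov metric), so it suffices to prove \emph{sequential} relative compactness: every sequence $(P_j)\subset\Pi$ admits a subsequence converging weakly to some Borel probability measure $P$ on $\mathcal{X}$ (with $P$ not required to lie in $\Pi$).

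First I would use tightness to tame the ambient space. For each $m$ choose a compact $K_m$ with $P(K_m) > 1 - 1/m$ for every $P\in\Pi$; replacing $K_m$ by $\bigcup_{i\le m} K_i$ I may take them nested. Then $S = \mathrm{cl}(\bigcup_m K_m)$ is $\sigma$-compact, hence separable, and every $P\in\Pi$ satisfies $P(S)=1$. On each compact $K_m$ the space $C(K_m)$ is separable, so collecting countable uniformly dense subsets over all $m$ gives a countable family $\mathcal{F}$ of bounded continuous functions rich enough to determine measures on $S$. Given $(P_j)$, each sequence $\int f\,dP_j$ ($f\in\mathcal{F}$) is bounded, so a diagonal extraction over the countable family $\mathcal{F}$ yields one subsequence $(P_{j_k})$ along which $L(f) := \lim_k \int f\,dP_{j_k}$ exists for all $f\in\mathcal{F}$.

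The functional $L$ is positive and linear, and extends by uniform density to a positive linear functional on $C(K_m)$ for each $m$. Because $\mathcal{X}$ is not assumed locally compact, I apply the Riesz representation theorem \emph{compact set by compact set}: it produces Borel measures $\mu_m$ on the nested $K_m$ that are mutually consistent (each is the restriction of the same functional), and I glue them into a single Borel measure $P$ concentrated on $S$ by taking the increasing limit $P(B) = \lim_m \mu_m(B\cap K_m)$.

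The crux, and the step I expect to be the main obstacle, is showing that $P$ is a genuine \emph{probability} measure of total mass one and that $P_{j_k}\Rightarrow P$; this is exactly where tightness is indispensable. Approximating $\mathbf{1}_{K_m}$ from above by functions in $\mathcal{F}$, together with inner/outer regularity on the metric space, gives $P(K_m) \ge \limsup_k P_{j_k}(K_m) \ge 1 - 1/m$ for every $m$, so no mass escapes in the limit and $P(\mathcal{X}) = 1$. Weak convergence then follows from the portmanteau characterization: for bounded continuous $g$ I approximate $g$ uniformly on each $K_m$ by elements of $\mathcal{F}$, bound the remainder off $K_m$ by the uniform tightness estimate $1/m$, and let $m\to\infty$ to obtain $\int g\,dP_{j_k}\to\int g\,dP$. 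The delicate bookkeeping lies in making the regularity approximation of the indicators rigorous and in checking the consistency of the compact-set representations on a possibly non-separable, non-complete ambient metric space; tightness is precisely what confines all the mass to the well-behaved separable core $S$ where these standard tools apply.
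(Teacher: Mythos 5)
You should first note that the paper contains no proof of \cref{thm:prok} to compare against: it is quoted as background in the appendix, with an explicit citation to Theorem 5.1 of \cite{billingsley2013convergence}. So your proposal has to be judged against the classical argument itself. Your route is the standard one for the direction actually asserted (tightness implies relative weak-$^*$ compactness, which is the only direction the statement claims): confine the mass to a $\sigma$-compact separable core $S$, observe metrizability there so sequential compactness suffices, diagonalize over a countable function family, build a limit measure, and use tightness both to retain total mass one and to run the two-term estimate for $\int g\,dP_{j_k}\to\int g\,dP$. This is the functional-analytic (Riesz representation) variant of the proof; Billingsley's own proof of the cited theorem instead diagonalizes over a countable class of \emph{sets} (finite unions of small balls intersected with the compacts $K_u$) and constructs $P$ by inner and outer approximation, never invoking Riesz representation.

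There is one step that fails as written, and it is exactly where your variant differs from Billingsley's: the claim that the Riesz representations on the nested compacts are ``mutually consistent (each is the restriction of the same functional).'' The functional $L(f)=\lim_k\int_{\mathcal{X}} f\,dP_{j_k}$ is defined on functions living on all of $\mathcal{X}$, and its value is sensitive to behaviour off $K_m$: if $f_1,f_2$ are two bounded continuous extensions (Tietze, which you should also cite to extend elements of $C(K_m)$ at all) of the same $\varphi\in C(K_m)$, then $\abs{L(f_1)-L(f_2)}\le\left( \norm{f_1}_{\infty}+\norm{f_2}_{\infty} \right)\sup_k P_{j_k}(\mathcal{X}\setminus K_m)$, which is of order $1/m$ but does \emph{not} vanish for fixed $m$. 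So $L$ does not induce a well-defined positive functional on $C(K_m)$, the measures $\mu_m$ depend on the chosen extensions, and one cannot assert $\mu_{m'}(B\cap K_m)=\mu_m(B\cap K_m)$ for $m'\ge m$; in particular the numbers $\mu_m(B\cap K_m)$ need not be increasing in $m$, so the ``increasing limit'' defining $P(B)$ is unjustified as stated. The repair is routine but must be carried out: either show the ambiguities are uniformly $O(1/m)$ over Borel $B$, so the sequence $\mu_m(B\cap K_m)$ is Cauchy and the limit defines a countably additive $P$, or diagonalize instead so that $\lim_k\int_{K_m} f\,dP_{j_k}$ exists for each $f$ in a countable dense subset of each $C(K_m)$, which yields honestly defined $\mu_m$ with consistency errors again controlled by $1/m$. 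Two smaller points to record: the approximants to a bounded continuous $g$ should be chosen with sup norm bounded by $\norm{g}_{\infty}$ (truncate) so the off-$K_m$ remainder is really controlled by the tightness bound, and the continuous majorants of $\mathbf{1}_{K_m}$, e.g.\ $x\mapsto\max\left\{ 0,1-d(x,K_m)/\varepsilon \right\}$ for rational $\varepsilon$, must be adjoined to $\mathcal{F}$ \emph{before} the diagonal extraction so that $L$ is defined on them when you prove $P(K_m)\ge 1-1/m$.
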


\begin{lem}
	Let $C$ be a convex set in $\mathbb{R}^n$. Then $\forall F\in F(C)$, $\mathrm{ext}(F)\subset \mathrm{ext}(C)$.
	\label{lem:face}
\end{lem}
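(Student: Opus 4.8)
The plan is to argue directly from the definitions, taking an arbitrary $v\in\mathrm{ext}(F)$ and showing $v\in\mathrm{ext}(C)$. I would use the standard characterization: $v$ fails to be an extreme point of $C$ exactly when there exist distinct $y,z\in C$ and $\lambda\in(0,1)$ with $v=\lambda y+(1-\lambda)z$. So I would suppose such a representation exists and aim to contradict the assumption $v\in\mathrm{ext}(F)$.

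The key step is to invoke the defining property of a face. Since $v\in F$ and $v$ lies in the relative interior of the segment $[y,z]$ whose endpoints $y,z$ belong to $C$, the face property of $F$ forces both $y$ and $z$ to lie in $F$. This is the one place where it matters that $F$ is a face rather than an arbitrary convex subset of $C$: a generic convex subset would not pull the endpoints back into itself. Once $y,z\in F$ is established, I would apply extremality of $v$ inside $F$: since $v\in\mathrm{ext}(F)$ and $v=\lambda y+(1-\lambda)z$ with $y,z\in F$ and $\lambda\in(0,1)$, extremality forces $y=z=v$, contradicting that $y,z$ were distinct. Hence no such nontrivial representation exists and $v\in\mathrm{ext}(C)$.

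Because the entire argument collapses to a two-line deduction once the definition of a face is in hand, I do not expect a genuine obstacle. The only point requiring care is to fix at the outset the precise form of the definition of a face that I use, namely that whenever the relative interior of a segment with endpoints in $C$ meets $F$, both endpoints lie in $F$; with that convention the argument goes through verbatim. I would also note in passing that $\mathrm{ext}$ here is the $0$-dimensional instance of the $k$-extreme points $\mathrm{ext}_k$ appearing elsewhere, so that the conclusion $\mathrm{ext}(F)\subset\mathrm{ext}(C)$ slots directly into the place it is invoked in the proof of \cref{lem:1}, where the extreme points $v_j$ of a face $F$ are promoted to extreme points of $\mathcal{V}(\mu)$.
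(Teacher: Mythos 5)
Your proof is correct, and its logical skeleton matches the paper's: pull the endpoints $y,z$ of the segment back into $F$, then apply extremality of $v$ within $F$. The difference lies in which definition of ``face'' does the pulling. You take the intrinsic, segment-based definition (if the relative interior of a segment with endpoints in $C$ meets $F$, both endpoints lie in $F$), so the key step is immediate by hypothesis. The paper instead posits an affine function $f$ ``defining'' $F$ --- i.e.\ it treats $F$ as an exposed face --- and then derives the segment property by a two-case computation with $f$: if $y\in F$ then $f(z)=f(x)$ forces $z\in F$; if $y\notin F$ then $\alpha f(y)+(1-\alpha)f(z)<f(x)$ gives a contradiction. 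Your route is both shorter and strictly more general: not every face of a convex set is exposed, and the faces invoked in the proof of \cref{lem:1} (the $k$-dimensional face having a given $k$-extreme point in its relative interior) need not be exposed, so the intrinsic definition is actually the right one to use there. The only caveat is the one you already flag yourself: you must fix the segment-based definition of a face at the outset, since the paper's notation $F(C)$ is never formally defined and the paper's own proof implicitly adopts the narrower ``exposed'' reading.
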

\begin{proof}
	$\forall x\in \mathrm{ext}(F)$ there exists affine $f$ defining face $F$. $\forall y,z\in C$. Suppose $y\in F$, then $f(x)=f(y)$. If there exists $\alpha\in(0,1)$ s.t. $\alpha y+(1-\alpha)z=x$, then $\alpha f(y)+(1-\alpha) f(z)=f(x)\implies\ f(z)=f(x)=f(y)$ so $z\in F$. Since $x\in \mathrm{ext}(F)$, $x\in\left\{ y,z \right\}$. Suppose $y\not\in F$, then $f(x)=\alpha f(y)+(1-\alpha) f(z)< f(x)$ by definition of $f$, contradiction. To sum up, $x\in \mathrm{ext}(C)$.
\end{proof}

\end{document}